\documentclass[11pt,reqno]{amsart}

\usepackage{amssymb,latexsym,amsrefs,enumitem}
\usepackage{graphicx}
\usepackage{url}		%does nice formatting of URLs

\calclayout

\theoremstyle{plain}
\numberwithin{equation}{section}
\newtheorem{thm}{Theorem}[section]
\newtheorem{theorem}[thm]{Theorem}
\newtheorem{lemma}[thm]{Lemma}

\newtheorem{proposition}[thm]{Proposition}

\begin{document}
%% replace the values in the next three lines by the correct information
%\monthyear{Month Year}
%\volnumber{Volume, Number}
%\setcounter{page}{1}

\title{Extension of the Equation $\sum\limits_{j=1}^{k}jF_{j}^{p}=F_{n}^{q}$ to a Family of Lucas Sequences}
\author{Benjamin Earp-Lynch}
\author{Simon Earp-Lynch}
\address{Department of Mathematics\\
		Carleton University\\
		K1S 5B6, Canada}
\email{benjaminearplynch@cmail.carleton.ca}
\email{simonearplynch@cmail.carleton.ca}
\author{Omar Kihel}
\address{Department of Mathematics and Statistics\\
		Brock University\\
		L2S 3A1, Canada}
\email{okihel@brocku.ca}
\thanks{The third and fourth authors were supported by King Mongkut's Institute of Technology Ladkrabang Research Fund (KREF206502). The third author was supported in part by NSERC}
\author{Puntani Pongsumpun}
\address{Department of Mathematics, School of Science\\
		King Mongkut's Institute of Technology Ladkrabang\\
		Bangkok 10520, Thailand}
\email{puntani.po@kmitl.ac.th}

\begin{abstract}
We solve the equation $\sum\limits_{j=1}^{k}jU_{j}(x,y)^{p}=U_{n}(x,y)^{q}$ positive integers $x,p,q,k,n$, with $y=\pm1$ and $\max\{p,q\}\leq11$, where $U_{m}(x,y)=\frac{\alpha^{m}-\beta^{m}}{\alpha-\beta}$ for $\alpha$ and $\beta$ roots of the polynomial $t^2-xt+y$. This generalizes existing results on similar equations, wherein the sequence was fixed as either the Fibonacci or Pell numbers. In addition, we find all solutions with $k=2$ and $y=\pm1$.
\end{abstract}

\maketitle

\section{Introduction}

Let $F_{n}$ denote the $n$th term in the sequence of Fibonacci numbers, which may be defined using Binet's formula $$F_{n}=\frac{\alpha^{n}-\beta^{n}}{\alpha-\beta},$$ where $\alpha$ and $\beta$ are the roots of $t^{2}-t-1$. 
In \cite{N-S-S}, Soydan, N\'{e}meth and Szalay examine the equation

\begin{equation}\label{FibonacciEq}
\sum_{j=1}^{k}jF_{j}^{p}=F_{n}^{q}.
\end{equation}

They found  $$(n,k,p,q)\in\{(1,1,p,q),(2,1,p,q),(4,2,p,1),(4,3,1,2),(8,4,1,1)\}$$ to be the only solutions with $n,k$ positive integers and $p,q$ in the set $\{1,2\}$. Defining the solutions $F_{1}^{p}=F_{1}^{q}=F_{2}^{q}$ and $F_{1}^{p}+2F_{2}^{p}=F_{4}$ to be trivial, they then conjecture that the only nontrivial solutions in positive integers to \eqref{FibonacciEq} are $(n,k,p,q)=(4,3,1,2),(8,4,1,1),$ and $(4,3,3,3)$.

Further progress on equation \eqref{FibonacciEq} has been made in \cites{Luca1,Luca2}. In \cite{Luca1}, the authors explicitly find all solutions with $\max\{p,q\}\leq10$, thereby verifying the conjecture made in \cite{N-S-S} for these exponents. Their method would yield a bound for $k$ given any fixed exponents $p$ and $q$. In \cite{Luca2}, the authors use Baker's method of linear forms in logarithms to find a bound on the size of the parameters. Specifically, they find that $\max{\{(n,k,p,q)\}}<10^{2500}$ in any potential solution of equation \eqref{FibonacciEq}, which shows that the number of solutions is certainly finite. The bound is, however, incredibly large, and the authors note that a full solution would be far beyond the reach of present-day computing power with this method alone.

Let $L_{n}$ denote the $n$th Lucas number, which may be defined by $$L_{n}=\alpha^{n}+\beta^{n}.$$ Solutions to the equation $$\sum\limits_{j=1}^{k}jF_{j}^{p}=L_{n}^{q}$$ were examined in \cite{N-T-T}, where the maximum of the parameters $(n,k,p,q)$ was bounded using methods similar to those in \cite{Luca2}. Again, the bound is enormous.

The Pell numbers, the $n$th of which we shall denote $P_{n}$, may be defined using a similar Binet-type formula with $\alpha$ and $\beta$ roots of the polynomial $t^{2}-2t-1$. 
In \cite{T-T}, the authors extend the results of \cite{N-S-S} to Pell numbers,  finding that the only solutions to the equation
\begin{equation}\label{PellNumEq}
\sum_{j=1}^{k}jP_{j}^{p}=P_{n}^{q}
\end{equation}
with exponents in the set $\{1,2\}$ are $(n,k,p,q)=(1,1,p,q)$, which they call trivial, and $(3,2,1,1)$.

\vspace*{2mm}

Let $x$ and $y$ be integers and $\alpha$ and $\beta$ be the roots of the polynomial $f(t)=t^{2}-xt+y$. We may define the Lucas sequence of the first kind, $\{U_{n}(x,y)\}_{n\in\mathbb{Z}}$, using the Binet formula
\begin{equation}\label{FirstKindBinet}
U_{n}(x,y)=\frac{\alpha^{n}-\beta^{n}}{\alpha-\beta},
\end{equation}
or using the recurrence $$U_{0}(x,y)=0,\quad U_{1}(x,y)=1,\text{ and}\quad U_{n+1}(x,y)=xU_{n}-yU_{n-1}.$$ Both of these definitions may be extended so as to define $U_{n}$ for $n$ a negative integer, and for $n>0$, we find that $U_{-n}=\frac{-U_{n}}{y^{n}}$. Observe that when $x=1$ and $y=-1$, this is the Fibonacci sequence, that is $U_{n}(1,-1)=F_{n}$ for all $n$. 
Interesting sequences also arise when fixing $y=1$, for instance $U_{n}(3,1)=F_{2n}$ is the sequence of even-indexed Fibonacci numbers. The values $(v,u)=\left(\pm\left(\frac{U_{n+1}(4,1)-U_{n-1}(4,1)}{2}\right),\pm U_{n}(4,1)\right)$ are the solutions to the Pell equation $v^{2}-3u^{2}=1$, in other words the units in the ring of integers of the number field $\mathbb{Q}(\sqrt{3})$. 
Defined recursively, the sequence $U_{n}(2,1)=n$ is simply the sequence of integers. 
\vspace*{2mm}

The aim of this paper shall be to extend the results in \cites{N-S-S,Luca1,T-T} to all sequences $\{U_{n}(x,y)\}$ for $x$ any positive integer and $y=\pm1$. Specifically, we prove the following theorem.
\begin{theorem}\label{MainThm}
The only solutions to the equation $1+2x^p=U_{n}(x,y)^{q}$ in positive integers $n,p,q,x$ and with $y=\pm1$ are $$(n,p,q,x,y)=(4,p,1,1,-1), (1+2^{p+1},p,1,2,1) ,(3,2,2,2,1) ,(3,1,1,2,-1) \text{ and } (5,3,1,3,1).$$ Moreover, suppose that $n,k,p,q,x$ are positive integers with $\max\{p,q\}\leq11$, $k\geq3$, and that $y=\pm1$, with $x\geq2$ when $y=-1$ and $x\geq3$ when $y=1$. Then the equation
\begin{equation}\label{MainEq}
\sum_{j=1}^{k}jU_{j}(x,y)^{p}=U_{n}(x,y)^{q}
\end{equation}
has no solutions.
\end{theorem}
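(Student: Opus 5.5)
The theorem has two parts, and I would handle them separately. The first part is the case $k=2$, the equation $1+2x^p=U_n^q$. The second part is $k\ge 3$ with bounded exponents.

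\emph{The case $k=2$.} First I dispose of small $x$ directly.
\begin{itemize}
\item For $x=1$ and $y=-1$ we have the Fibonacci numbers, and $1+2=3=F_4$ gives the first family with $q=1$. One checks that $3$ is not a perfect power.
\item For $x=2$ and $y=1$ we have $U_n=n$, so every $n=1+2^{p+1}$ works when $q=1$. For $q\ge2$, Mihailescu's theorem applied to $n^q-2^{p+1}=1$ leaves only $9-8=1$, which gives $(3,2,2,2,1)$.
\item For $x=1$ and $y=1$ the sequence is periodic with values in $\{0,\pm1\}$, so $3$ is never attained.
\end{itemize}

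In general $U_n$ is increasing in $n$ (for $x\ge2$ with $y=-1$, or $x\ge3$ with $y=1$). Moreover $U_2=x<1+2x^p$, and $U_3=x^2-y$. The key observation is that $U_n\equiv U_{n\bmod m}$ up to sign patterns modulo $x$. More precisely, $U_n\equiv 0,\pm1 \pmod x$, and $U_n\equiv\pm1\pmod x$ exactly when $n$ is odd. So $U_n^q\equiv1\pmod x$ forces $n$ odd, since $U_{2m}\equiv \pm m x\cdot(\ldots)$.

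I would then compare sizes, since $U_n\approx \alpha^{n-1}$. When $q=1$, write $U_{2m+1}$ as a polynomial in $x$ and match it against $1+2x^p$; expanding modulo $x^2$ and $x^3$ should force $n\le 5$. For instance $U_5(3,1)=55=1+2\cdot27$. When $q\ge2$, I would reduce modulo $x^2$: we have $U_{2m+1}\equiv (-y)^m(1-\ldots)$, which pins down $p$ small or gives contradictions. The residual small cases are then the equations $x^2\pm1$ (and similar) raised to the power $q$ equal to $1+2x^p$. These are handled by comparing leading terms, or by the Catalan/Ljunggren-type results for $q\ge2$.

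\emph{The case $k\ge3$.} Here I follow the method of \cite{Luca1}. Using the Binet formula, expand
\[
\sum_{j=1}^k jU_j^p=\frac{1}{(\alpha-\beta)^p}\sum_{i=0}^p\binom{p}{i}(-1)^{p-i}\sum_{j=1}^k j\,(\alpha^i\beta^{p-i})^j .
\]
Each inner sum is an arithmetico-geometric sum with a closed form. The dominant term is $\approx \dfrac{k\alpha^{p(k+1)}}{(\alpha-\beta)^p(\alpha^p-1)}$. Comparing with $U_n^q\approx \alpha^{nq}/(\alpha-\beta)^q$, the real-valued quantity
\[
\Lambda=\log k + (p(k+1)-nq)\log\alpha+\text{const}
\]
must be exponentially small. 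The essential point is that $\log k$ cannot be approximated well by a rational multiple of $\log\alpha$ unless $k$ is bounded. Concretely, $nq$ is determined by $\log$ of the sum, and the fractional quantity $k\alpha^{-(nq-pk)}$ must lie in a bounded interval that avoids a neighborhood of a specific algebraic number. So I would instead work $p$-adically or with congruences.

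For fixed $(p,q)$ I would reduce the equation modulo suitable primes dividing $U_m$ (or modulo $x$, $x^2$) to constrain $k$ and $n$ modulo periods. For example, modulo $x$ the left side is $\sum_{j\text{ odd}} j(\pm1)^p$, which grows like $k^2/4$. This gives $k^2\approx$ something $\equiv U_n^q \pmod x$, and it yields contradictions when combined with the growth estimate $nq\approx p(k+1)+\log_\alpha k$ for large $x$.

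\emph{Main obstacle.} The hard step is the uniformity in $x$. The congruence and size arguments must work for all $x$, not only fixed sequences. I expect to treat large $x$ (say $x\ge x_0$) via Taylor expansion in $1/x$: the dominant term has relative corrections of order $k/x$ and $1/x^2$. This should force an impossible exact matching of $x$-adic expansions, just as in the $k=2$ case. Small $x$ and $k$ would then be finished by explicit computation for each $p,q\le 11$.
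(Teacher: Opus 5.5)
\textbf{The case $k=2$.} Your argument fails at the step meant to bound $n$. Lemma~\ref{Identities}\ref{id5} gives $U_{2m+1}\equiv(-y)^m+(-y)^{m-1}\binom{m+1}{2}x^2 \pmod{x^4}$. So for $q=1$ and $p\ge4$, expanding modulo powers of $x$ yields $x^2\mid\binom{m+1}{2}=\frac{(n-1)(n+1)}{8}$. This bounds $x$ in terms of $n$, not $n$. For fixed $x$ the sequence is purely periodic modulo $x^p$ with $U_1=1$, so $U_n\equiv1\pmod{x^p}$ holds on whole residue classes of $n$. Congruences of this kind, even combined with size estimates, give no reason for $n\le5$.

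What is needed is the global fact that $U_n-1=2x^p$ has no prime factor outside $2x$. The paper uses it through the factorization $U_n-1=U_{(n-1)/2}V_{(n+1)/2}$ or $U_{(n+1)/2}V_{(n-1)/2}$, which comes from Lemma~\ref{Identities}\ref{id3} precisely because $y=\pm1$. It then applies Carmichael's primitive divisor theorem: every prime factor of $U_{(n\pm1)/2}$ divides $2x$, hence already divides $U_2$ or $U_3$. So that term has no primitive divisor, which forces $(n\pm1)/2\le12$, i.e.\ $n\le25$. Only then does $x^2\mid\frac{(n-1)(n+1)}{8}$ give $x\le8$ and a finite check.

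Likewise, for $q\ge2$ you jump to the ``residual'' equations $(x^2-y)^q=1+2x^p$, i.e.\ $n=3$, without any argument bounding $n$ or $q$. The paper first applies Zsigmondy's theorem to $U_n^i-1$ to get $q\in\{1,2\}$. This works because every prime of $2x$ already divides $U_n^2-1$, since $U_n$ is odd and $U_n\equiv\pm1\pmod x$ for odd $n$. It then disposes of $q=2$ using $\gcd(U_n-1,U_n+1)=2$ together with $U_n\equiv\pm1\pmod{x^2}$.

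\textbf{The case $k\ge3$.} Here the proposal contains no mechanism that bounds $k$ for fixed $(p,q)$. Your $\Lambda$ is not exponentially small. The exact main term is $\frac{k\alpha^p-(k+1)}{(\alpha^p-1)^2}\cdot\frac{\alpha^{p(k+1)}}{D^{p/2}}$, and replacing $k\alpha^p-(k+1)$ by $k(\alpha^p-1)$ leaves a relative error of order $1/k$. You then abandon $\Lambda$ for unspecified congruences and a ``Taylor expansion in $1/x$''.

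The missing idea is the Liouville-type norm argument. Put $\mu=qn-p(k+1)$ and $\Delta_p=(\alpha^p-1)^2$, and consider the number $k\alpha^p-(k+1)-\alpha^{\mu}D^{(p-q)/2}\Delta_p$ in $\mathbb{Q}(\sqrt D)$. Here $\alpha$ is a unit because $y=\pm1$. For fixed $p,q$, this number has absolute value at most a polynomial in $k$ and $x$ times $\alpha^{-k/q}$. Its conjugate is bounded by a polynomial in $k$ and $x$, unless $\alpha^\mu\le D^{(q-p)/2}/(3\Delta_p)$, which bounds $k$ directly. Hence there are two possibilities:
\begin{enumerate}
\item It vanishes. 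Taking norms then gives equations such as $(V_p-2)k^2+(V_p-2)k-1=\pm D^{p-q}(V_p-2)^2$ and $V_pk^2+(V_p-2)k-1=\pm D^{p-q}V_p^2$. These must be solved separately; the second is Lemma~\ref{SecondKindEquation}, proved via Sanna's valuation formula.
\item It is (up to a power of $D$) a nonzero algebraic integer. Its norm is then bounded below, so $\alpha^{k/q+p}$ is at most a polynomial in $k$ and $x$.
\end{enumerate}

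Your observation that the left side reduces modulo $x$ to $\sum_{j\ \mathrm{odd}} j(\pm1)^{\ldots}$ is the right tool for uniformity in $x$. Its role is to give $x\le\frac{(k+1)^2}{4}+1$, so that $D$ is polynomial in $k$. The norm bound then becomes $k<q\log g(k)/\log\alpha-pq$ with $g$ a polynomial and $\log\alpha\ge\log(1+\sqrt2)$. That yields an absolute bound on $k$ and reduces everything to a finite computation. Without the norm step and the analysis of the vanishing case, the congruence alone bounds nothing.
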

Section \ref{Prelims} will establish useful facts and results needed in order to prove this Theorem \ref{MainThm}. The first part of the theorem, concerning the solutions in the case $k=2$, without the need for a bound on the exponents $p$ and $q$, will be proven in section \ref{k2Case}. Section \ref{MainSec} will finish the proof, generalizing the results of \cites{N-S-S,Luca1,T-T} to sequences of the form $U_{n}(x,\pm1)$. There will be a brief explanation of our decision to limit the scope of this paper to sequences with $y=\pm1$ at the end of sections \ref{k2Case} and \ref{MainSec}, by which point the reasons for this choice will be more evident.

The solutions in the case $x=1,y=-1$ have been established in \cite{Luca1} for $\max\{p,q\}\leq10$, and to extend their results to $p=11$ or $q=11$ requires only that one change the constants in their final inequality and then use a computer to search for solutions up to the resulting bound on $k$.

In the case $x=2,y=1$, where $U_{n}(2,1)=n$, equation \eqref{MainEq} becomes $$1^{p+1}+2^{p+1}+\dots+k^{p+1}=n^{q},$$ which is the so-called general cannonball equation, first proposed in the case corresponding to $p=1,q=2$ by, coincidentally, \'Edouard Lucas \cite{Lucas3}, and then investigated in the case of general exponents $p$ and $q$ in \cite{Schaff}, where the solutions with $p\leq10$ were established for many different values of $q$. A few more recent results on the equation include \cites{MR3349441,MR2008104,B-G-P}.

Finally, since the sequence $U_{n}(1,1)$ is periodic with only the terms $1,0$ and $-1$, this case may be solved by inspection. The left side of equation \eqref{MainEq} equals $1$ when $k=1$ and $p$ is any integer, and when $k$ is $1$ or $2$ modulo $6$ and $p$ is odd, and equals $-1$ when $k$ is $4$ or $5$ modulo $6$ and $p$ is odd. The right side of \eqref{MainEq} equals $1$ when $n$ is $1$ or $2$ modulo $6$ and $q$ is any integer, and when $n$ is $4$ or $5$ modulo $6$ and $q$ is even, and equals $-1$ when $n$ is $4$ or $5$ modulo $6$ and $q$ is odd.

The methods that follow will be similar to those used in \cite{Luca1}, but the more general nature of our result will require that our methods diverge at times. 

\section{Preliminaries}\label{Prelims}

In this section, we shall state some facts upon which the rest of our work will rely. Proofs will be necessary in some cases, so that we may apply certain established results concerning Fibonacci numbers to a more general class of Lucas sequences.

Throughout the rest of this paper, we will use $\alpha$ and $\beta$ to denote the roots of $f(t)=t^{2}-xt+y$. In short order we will restrict the values of $x$ and $y$, but we will establish a few of our initial identities more generally first since we may do so with little extra complication. Let $$\alpha=\frac{x+\sqrt{D}}{2}\quad\text{ and }\quad\beta=\frac{x-\sqrt{D}}{2},\quad D=(\alpha-\beta)^2=x^{2}-4y.$$ We will let $U_{n}(x,y)$ denote the Lucas sequence of the first kind as in \eqref{FirstKindBinet}, and $V_{n}(x,y)$ denote the Lucas sequence of the second kind, defined by
\begin{equation}\label{SecondKindBinet}
V_{n}(x,y)=\alpha^{n}+\beta^{n}.
\end{equation} 
When the context permits it, we will use $U_{n}$ (and likewise $V_{n}$) instead of $U_{n}(x,y)$ (resp. $V_{n}(x,y)$) to refer to an arbitrary sequence under the most recently given restrictions on the values of $x$ and $y$. 

\vspace*{2mm}

We state some well-known identities in the following lemma, that we may refer to them later. We remark that these identities hold for all Lucas sequences of the first kind, regardless of the choice of $x$ and $y$. Some of them, including identities \ref{id3} and \ref{id4}, may be found in Chapter 5 of \cite{B-R}.
\begin{lemma}\label{Identities}
The following hold for all Lucas sequences $U_{n}(x,y)$, all integers $m,n$ and positive integers $r$:
\begin{enumerate}[label=\textnormal{(\roman*)}]
\item{} $\alpha^{n}=\alpha U_{n}-yU_{n-1}$\label{id1};
\item{} $\beta^{n}=\beta U_{n}-yU_{n-1}$\label{id2};
\item{} $V_{n}=U_{n+1}-yU_{n-1}$\label{id1a};
\item{} $U_{m+n}=U_{m}V_{n}-y^{n}U_{m-n}$\label{id3};
\item{} $U_{n}^{2}-U_{n-1}U_{n+1}=y^{n-1}$\label{id3a};
\item{} $V_{n}^{2}=DU_{n}^{2}+4y^{n}$\label{id4};
\item{} $U_{r}=\sum\limits_{i=0}^{\lfloor\frac{r-1}{2}\rfloor}(-y)^{i}\binom{r-i-1}{i}x^{r-2i-1}$\label{id5};
\end{enumerate}
\end{lemma}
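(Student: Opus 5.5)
All seven identities follow from the Binet formulas \eqref{FirstKindBinet} and \eqref{SecondKindBinet} together with the two relations $\alpha+\beta=x$ and $\alpha\beta=y$, both of which come from Vieta's formulas for $t^{2}-xt+y$. Throughout we assume $D\neq0$, so that $\alpha\neq\beta$ and the Binet formula makes sense. The argument for each identity is a direct algebraic manipulation; identity \ref{id5} is the only one that needs an induction.

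For \ref{id1}, we compute
\[
\alpha U_n-yU_{n-1}=\frac{\alpha^{n+1}-\alpha\beta^n-\alpha\beta\alpha^{n-1}+\alpha\beta\beta^{n-1}}{\alpha-\beta}.
\]
The numerator equals $\alpha^{n+1}-\alpha^n\beta=\alpha^n(\alpha-\beta)$, so the expression is $\alpha^{n}$. Identity \ref{id2} follows in the same way, or by applying the symmetry $\alpha\leftrightarrow\beta$, under which $U_n$ is invariant.

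For \ref{id1a}, write $yU_{n-1}=\alpha\beta U_{n-1}$. This gives
\[
U_{n+1}-yU_{n-1}=\frac{\alpha^{n+1}-\beta^{n+1}-\alpha^n\beta+\alpha\beta^n}{\alpha-\beta}=\frac{(\alpha^n+\beta^n)(\alpha-\beta)}{\alpha-\beta}=V_n.
\]

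For \ref{id3}, expand $(\alpha^m-\beta^m)(\alpha^n+\beta^n)=\alpha^{m+n}-\beta^{m+n}+\alpha^m\beta^n-\alpha^n\beta^m$. The cross terms equal $(\alpha\beta)^n(\alpha^{m-n}-\beta^{m-n})=y^n(\alpha^{m-n}-\beta^{m-n})$. Dividing by $\alpha-\beta$ and rearranging gives the identity. Negative $m-n$ is allowed, since the Binet formula defines $U$ at all integer indices.

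For \ref{id3a}, compute
\[
(\alpha-\beta)^2\bigl(U_n^2-U_{n-1}U_{n+1}\bigr)=-2(\alpha\beta)^n+(\alpha\beta)^{n-1}(\alpha^2+\beta^2)=(\alpha\beta)^{n-1}(\alpha-\beta)^2.
\]
Dividing by $(\alpha-\beta)^2$ gives $y^{n-1}$.

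For \ref{id4}, use $D=(\alpha-\beta)^2$. Then $DU_n^2=(\alpha^n-\beta^n)^2$, and adding $4(\alpha\beta)^n=4y^n$ gives $(\alpha^n+\beta^n)^2=V_n^2$.

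Finally, \ref{id5} is proved by strong induction on $r$, using the recurrence $U_{r+1}=xU_r-yU_{r-1}$. The base cases are $U_1=1$ and $U_2=x$, and the formula gives both. For the inductive step, the coefficient of $x^{r-2i}$ in $xU_r-yU_{r-1}$ is
\[
(-y)^i\binom{r-i-1}{i}+(-y)\cdot(-y)^{i-1}\binom{r-i-1}{i-1}=(-y)^i\binom{r-i}{i}.
\]
Here we used Pascal's rule, and the binomial coefficients vanish outside their natural range. This matches the claimed formula for $U_{r+1}$.

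The only point requiring care is this step: one must check the summation limits $\lfloor (r-1)/2\rfloor$ against $\lfloor r/2\rfloor$ in the parity cases, and see that the boundary terms are exactly covered by the vanishing binomial coefficients.
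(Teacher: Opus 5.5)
Your proof is correct and takes the same route as the paper. You do direct Binet-formula computations with $\alpha+\beta=x$ and $\alpha\beta=y$ for (i)--(vi), and induction via $U_{r+1}=xU_r-yU_{r-1}$ for (vii). Your coefficient-by-coefficient use of Pascal's rule, with $\binom{a}{b}=0$ outside $0\le b\le a$ and $a=r-i-1\ge 0$ in the relevant range, is a tidier version of the paper's explicit parity split. Like the paper's argument, yours tacitly needs $D\neq 0$; the degenerate case $D=0$ follows by continuity, since each identity is a polynomial identity in $x$, $y$ (and $\alpha$, $\beta$).
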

\begin{proof}
The first six identities may be shown by straightforward application of the Binet formula $U_{n}=\frac{\alpha^{n}-\beta^{n}}{\alpha-\beta}$. We prove \ref{id5} by induction. Observe that
\begin{align*}
U_{1}=&\sum\limits_{i=0}^{0}(-y)^{0}\binom{1-1}{0}x^{1-1}=x^{0}=1,\text{ and}\\
U_{2}=&\sum\limits_{i=0}^{0}(-y)^{0}\binom{2-1}{0}x^{2-1}=x^1=x.
\end{align*}
Now suppose that identity \ref{id5} holds for all positive integers $m\leq r$. Then
\begin{align*}
U_{r+1}=xU_{r}-yU_{r-1}&=x\sum\limits_{i=0}^{\lfloor{\frac{r-1}{2}}\rfloor}(-y)^{i}\binom{r-i-1}{i}x^{r-2i-1}-y\sum\limits_{i=0}^{\lfloor{\frac{r-2}{2}}\rfloor}(-y)^{i}\binom{r-i-2}{i}x^{r-2i-2}\\
&=x^{r}+\sum\limits_{i=1}^{\lfloor{\frac{r-1}{2}}\rfloor}(-y)^{i}\binom{r-i-1}{i}x^{r-2i}+\sum\limits_{i=1}^{\lfloor{\frac{r-2}{2}}\rfloor+1}(-y)^{i}\binom{r-i-1}{i-1}x^{r-2i}\\
&=\begin{cases}x^{r}+\sum\limits_{i=1}^{\lfloor{\frac{r}{2}}\rfloor}(-y)^{i}\binom{r-i}{i}x^{r-2i}\text{ if $r$ is odd,}\\
x^{r}+\sum\limits_{i=1}^{\lfloor{\frac{r}{2}}\rfloor-1}(-y)^{i}\binom{r-i}{i}x^{r-2i}+(-y)^{\lfloor \frac{r}{2}\rfloor}\binom{r-\lfloor\frac{r}{2}\rfloor-1}{\lfloor\frac{r}{2}\rfloor-1}x^{r-2\lfloor\frac{r}{2}\rfloor}\text{ $r$ even,}
\end{cases}\\
&=\sum\limits_{i=0}^{\lfloor{\frac{r}{2}}\rfloor}(-y)^{i}\binom{r-i}{i}x^{r-2i},
\end{align*} 
which proves the identity for all positive integer values of $r$.
\end{proof}
Now assume that $x\geq1$ and $|y+1|\leq x$. Some of the consequences of these assumptions are that:
\begin{itemize}
\item{} $D=x^2-4y\geq x-2\geq0$ when $x\geq2$, and since the condition $|y+1|\leq x$ means that $y=0,-1$ or $-2$ when $x=1$, it follows that $D\geq0$ in these cases as well and equality holds only when $x=2,y=1$.
\item{} $\alpha=\frac{x+\sqrt{x^{2}-4y}}{2}\geq \begin{cases}x-1&\text{if $y>0$}\\ x&\text{if $y\leq0$}\end{cases},$ and since $y=0,-1$ or $-2$ when $x=1$, in either case we have $\alpha\geq1$.
\item{} $-1=\frac{x-\sqrt{x^{2}-4(-x-1)}}{2}\leq\beta\leq\frac{x-\sqrt{x^{2}-4(x-1)}}{2}=1$, and so $|\beta|<1$. 
\item{} The sequences $U_{n}(x,y)$ and $V_{n}(x,y)$ are nonnegative and increasing (in the case of $V_{n}$, increasing for $n>0$). When $y<0$ this is clear from the initial values and the recurrence relation. When $y\geq0$, this can be observed from the fact that $U_{n+1}(x,y)\geq (y+1)U_{n}(x,y)-yU_{n-1}(x,y)\geq y(U_{n}-U_{n-1})+U_{n}$, and so the result follows by induction. (Likewise for $V_{n}(x,y)$.)
\end{itemize}
We briefly consider the case when $y=0$. In this case we have $U_{n}(x,y)=x^{n-1}$ for $n\geq1$. This means equation \eqref{MainEq} has the solution $k=1,n=1$ for all $p$ and $q$. If $x=1$ then \eqref{MainEq} becomes $\frac{k(k+1)}{2}=1$, which has no solutions for $k>1$. If $x>1$ then equation \eqref{MainEq} becomes $\sum\limits_{j=1}^{k}jx^{p(j-1)}=x^{q(n-1)}$. The left hand side of this equation is $1\pmod{x}$ and, when $n>1$, the right hand side is $0\pmod{x}$ so there are no solutions with $x>1$ and $n>1$.
\\
Using the established notation, and the preceding lemma, we formulate bounds on the terms in these sequences. Our restrictions on $x$ and $y$ allow us to approximate the rate of growth of the terms in these sequences by powers of $\alpha$. This is an extension of Lemma 3 in \cite{Luca1}.

\begin{lemma}\label{UVBoundsNew}
For any positive integer $x$ and integer $y$ with $|y+1|\leq x$ such that $D=x^2-4y>0$, the following bounds apply for all integers $n\geq2$.
\begin{align}
x\alpha^{n-2}\leq& U_{n}(x,y)\leq\begin{cases} \alpha^{n-1}\text{ if $y\leq0$,}\\\frac{\alpha^{n}}{\sqrt{D}}\text{ if $y\geq0$,}\end{cases}\label{UBoundsNew}\\
x\alpha^{n-1}\leq& V_{n}(x,y)\leq\frac{\alpha^{n+1}}{x}\text{ when $y\leq0$,}\label{VBoundsNew-}\\
\alpha^{n}\leq& V_{n}(x,y)\leq\frac{\alpha^{n+1}}{\sqrt{D}}\text{ when $y\geq0$.}\label{VBoundsNew+}
\end{align}
\end{lemma}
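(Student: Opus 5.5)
The plan is to reduce every inequality in Lemma~\ref{UVBoundsNew} to an elementary comparison between powers of $\alpha$ and $\beta$, using the Binet formulas \eqref{FirstKindBinet} and \eqref{SecondKindBinet}. Write $\sqrt{D}=\alpha-\beta>0$ and use $x=\alpha+\beta$ and $y=\alpha\beta$. From the consequences listed just before the lemma I will use that $\alpha\geq1$ and $|\beta|<1$. I will also use that $\beta$ has the same sign as $y$, since $\beta=y/\alpha$ with $\alpha>0$. Each bound is then proved by clearing the positive denominator ($\sqrt{D}$, or $x$), expanding, and checking the sign of what remains.

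\emph{Bounds for $U_n$.} For the lower bound, multiply out: $x\alpha^{n-2}\sqrt{D}=(\alpha+\beta)(\alpha-\beta)\alpha^{n-2}=\alpha^n-\beta^2\alpha^{n-2}$. So $U_n\geq x\alpha^{n-2}$ is equivalent to $\beta^n\leq\beta^2\alpha^{n-2}$. This holds for every admissible $y$, because for $n\geq2$ we have $\beta^n\leq|\beta|^n\leq\beta^2\leq\beta^2\alpha^{n-2}$.

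For the upper bound when $y\geq0$, we have $\beta\geq0$, so $U_n=(\alpha^n-\beta^n)/\sqrt{D}\leq\alpha^n/\sqrt{D}$ immediately. When $y\leq0$, the inequality $U_n\leq\alpha^{n-1}$ becomes $\alpha^n-\beta^n\leq\alpha^n-\beta\alpha^{n-1}$, that is, $\beta\alpha^{n-1}\leq\beta^n$. Here $\beta\leq0$, so I split on the parity of $n$:
\begin{itemize}
\item if $n$ is even, $\beta^n\geq0\geq\beta\alpha^{n-1}$;
\item if $n$ is odd, the inequality reads $|\beta|^n\leq|\beta|\alpha^{n-1}$, which follows from $|\beta|<1\leq\alpha$.
\end{itemize}

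\emph{Bounds for $V_n$ when $y\leq0$.} The lower bound $x\alpha^{n-1}=\alpha^n+\beta\alpha^{n-1}\leq\alpha^n+\beta^n$ reduces to exactly the same inequality $\beta\alpha^{n-1}\leq\beta^n$ just proved. For the upper bound, expand $(\alpha+\beta)(\alpha^n+\beta^n)$. It then suffices to show
\begin{equation*}
\beta\alpha^n+\beta^n(\alpha+\beta)\leq0 .
\end{equation*}
Put $b=-\beta\in[0,1)$. The left side equals $-b\alpha^n+(-b)^n(\alpha-b)$. Since $\alpha-b>0$, it is enough to show $b^n(\alpha-b)\leq b\alpha^n$, and this follows from $b^{n-1}\leq1\leq\alpha^{n-1}$ together with $\alpha-b\leq\alpha$.

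\emph{Bounds for $V_n$ when $y\geq0$.} The lower bound $V_n\geq\alpha^n$ is immediate from $\beta\geq0$. For the upper bound, expand $(\alpha-\beta)(\alpha^n+\beta^n)$. The requirement becomes $\beta^n(\alpha-\beta)\leq\beta\alpha^n$. If $\beta=0$ this is trivial. Otherwise divide by $\beta>0$ to get $\beta^{n-1}(\alpha-\beta)\leq\alpha^n$, which holds since $\beta^{n-1}\leq1$ and $\alpha-\beta\leq\alpha\leq\alpha^n$.

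The only real obstacle is the case $y<0$, where $\beta$ is negative and the sign of $\beta^n$ alternates with $n$. My approach is to compare absolute values and handle the two parities of $n$ separately, as above. A second point to watch is the edge case $\beta=0$ (that is, $y=0$), where every bound is checked directly.
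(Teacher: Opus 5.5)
Your proof is correct. For about half of the inequalities it takes a different route from the paper.

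\textbf{Where you agree with the paper.} The lower bound $U_n\ge x\alpha^{n-2}$ and the $y\ge0$ bounds $U_n\le\alpha^n/\sqrt D$ and $V_n\ge\alpha^n$ are essentially the paper's arguments, resting on $\beta^n\le\beta^2\le\beta^2\alpha^{n-2}$ and $\beta\ge0$.

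\textbf{Where you differ.} For the remaining bounds the paper uses the recurrence identities of Lemma~\ref{Identities} rather than Binet directly:
\begin{itemize}
\item For $y\le0$ it gets $U_n\le\alpha^{n-1}$ from $\alpha^{n-1}=U_n-yU_{n-1}/\alpha$ together with $U_{n-1}\ge0$.
\item It gets $V_n\ge x\alpha^{n-1}$ (for $y\le0$) and $V_n\le\alpha^{n+1}/\sqrt D$ (for $y\ge0$) by feeding the $U$-bounds through $V_n=U_{n+1}-yU_{n-1}$. This again needs $U_{n-1}\ge0$.
\item It gets $V_n\le\alpha^{n+1}/x$ from the chain $V_n\le\alpha^n-\beta\le\alpha^n-y/x\le\alpha^{n+1}/x$, which uses $\alpha\ge x$ for $y\le0$.
\end{itemize}

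\textbf{What each route buys.} Your route reduces every inequality to comparing $|\beta|^m$ with a power of $\alpha$. It is therefore uniform and self-contained: it needs neither the nonnegativity of $U_{n-1}$, which the paper justifies only briefly, nor $\alpha\ge x$. The paper's route is shorter because it reuses the $U$-bounds.

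The two are closer than they look. Your quantity $(\beta^n-\beta\alpha^{n-1})/\sqrt D$ equals $\alpha^{n-1}-U_n=-\beta U_{n-1}$. So your parity split for $\beta<0$ is in effect a direct proof that $U_{n-1}\ge0$.

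\textbf{One small correction.} You import $|\beta|<1$ from the paper's list of consequences, but that strict inequality fails on the boundary of the hypothesis $|y+1|\le x$:
\begin{itemize}
\item $y=x-1$ gives $\beta=1$, for example $x=3$, $y=2$;
\item $y=-x-1$ gives $\beta=-1$, for example $x=1$, $y=-2$.
\end{itemize}
Your argument only ever uses $|\beta|\le1$ and $\alpha\ge1$, so state it that way, for instance $b\in[0,1]$ rather than $b\in[0,1)$. Everything then goes through unchanged.
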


\begin{proof}
When $n\geq2$, we have $y^2\alpha^{n-4}=\beta^2\alpha^{n-2}\geq\beta^{2}\geq\beta^{n}$. So using the facts that $\alpha\beta=y$ and $\alpha+\beta=x$, we obtain $$U_{n}=\frac{\alpha^{n}-\beta^{n}}{\alpha-\beta}\geq\frac{\alpha^{n}-y^2\alpha^{n-4}}{\alpha-\beta}=\left(\alpha^{n-1}+y\alpha^{n-3}\right)\left(\frac{\alpha-\beta}{\alpha-\beta}\right)=(\alpha+\beta)\alpha^{n-2}=x\alpha^{n-2},$$ which proves the left inequality in \eqref{UBoundsNew}.

From Lemma \ref{Identities}\ref{id1}, we have $\alpha^{n-1}=U_{n}-y\frac{U_{n-1}}{\alpha}$, so $U_{n}\leq\alpha^{n-1}$ when $y\leq0$. When $y\geq0$, we have $\beta\geq0$, and so $U_{n}=\frac{\alpha^{n}-\beta^{n}}{\sqrt{D}}\leq\frac{\alpha^{n}}{\sqrt{D}}$, which proves the right inequality in \eqref{UBoundsNew}.

To show the inequalities in \eqref{VBoundsNew-} and \eqref{VBoundsNew+}, using the left inequality in \eqref{UBoundsNew} and Lemma \ref{Identities}\ref{id1a}, we see that when $y\leq0$, we have $x\alpha^{n-1}\leq U_{n+1}\leq U_{n+1}-yU_{n-1}=V_{n}$, which proves the left inequality in \eqref{VBoundsNew-}, and since $-\beta\leq-\frac{\alpha\beta}{x}=\frac{-y}{x}$, we have $V_{n}=\alpha^{n}+\beta^{n}\leq\alpha^{n}-\beta\leq\alpha^{n}-\frac{y}{x}\leq\alpha^{n}-y\frac{\alpha^{n-1}}{x}=\frac{\alpha^{n+1}}{x}$, giving the right inequality in \eqref{VBoundsNew-}. Similarly, when $y\geq0$, we have $\beta\geq0$ and so $V_{n}=\alpha^{n}+\beta^{n}\geq\alpha^{n}$, giving the left inequality in \eqref{VBoundsNew+}, and by Lemma \ref{Identities}\ref{id1a} and \eqref{UBoundsNew} $V_{n}=U_{n+1}-yU_{n-1}\leq\frac{\alpha^{n+1}}{\sqrt{D}}$, which gives the right inequality in \eqref{VBoundsNew+}.
\end{proof}

The next lemma gives lower bounds on the ratio of two successive terms in a manner similar to Lemma 4 in \cite{Luca1}.
\begin{lemma}\label{ULowerBd}
Let $x$ be a positive integer, $y$ be any nonzero integer satisfying $|y+1|\leq x$, and consider the sequence $U_{n}(x,y)$. For all integers $n>2$, the bound $$\frac{U_{n}}{U_{n-1}}\geq x-\frac{xy}{x^{2}-y}$$ applies when $y<0$. And for all integers $n\geq2$, the bound $$\frac{U_{n}}{U_{n-1}}\geq\alpha$$ applies when $y>0$.
\end{lemma}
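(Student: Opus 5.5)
The plan is to set $r_{n}=U_{n}/U_{n-1}$ and use the recurrence $U_{n}=xU_{n-1}-yU_{n-2}$, which gives
$$r_{n}=x-\frac{y}{r_{n-1}}.$$
Both bounds will then come from a simple induction on $n$. We will use the standing consequences of $|y+1|\leq x$: the sequence is nonnegative and increasing, and $\alpha\geq1$.

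\textbf{Case $y>0$.} Here $D=x^{2}-4y\geq0$, so $\alpha$ is real and $\beta=y/\alpha>0$. We prove $r_{n}\geq\alpha$ directly from the Binet formula rather than by induction. For $n\geq2$ the claim $U_{n}\geq\alpha U_{n-1}$ reads
$$\alpha^{n}-\beta^{n}\geq\alpha^{n}-\alpha\beta^{n-1},\quad\text{that is,}\quad \beta^{n-1}(\alpha-\beta)\geq0,$$
which holds since $\alpha\geq\beta\geq0$. (Equivalently, $U_{n}-\alpha U_{n-1}=\beta^{n-1}$.) In the degenerate case $D=0$, which forces $x=2,y=1$ and $U_{n}=n$, we have $\alpha=1$ and $n\geq n-1$. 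This case is therefore immediate.

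\textbf{Case $y<0$.} Write $y=-s$ with $s\geq1$ and $s\leq x-1$, so that $r_{n}=x+s/r_{n-1}$. The target is
$$x-\frac{xy}{x^{2}-y}=x+\frac{xs}{x^{2}+s}=x+\frac{s}{x+s/x}.$$
This suggests the following chain.
\begin{enumerate}
\item For $n\geq3$, $r_{n-1}>0$, so $r_{n}\geq x$. In fact $r_{2}=x$ and $r_{n}=x+s/r_{n-1}>x$.
\item An upper bound on $r_{n-1}$ plugged into $r_{n}=x+s/r_{n-1}$ gives a lower bound on $r_{n}$. So I need $r_{n-1}\leq x+s/x$ for all $n>2$.
\item That upper bound follows from step 1: $r_{n-1}\geq x$ gives $r_{n}=x+s/r_{n-1}\leq x+s/x$ for $n-1\geq2$.
\item Checking the base: $r_{2}=U_{2}/U_{1}=x\leq x+s/x$ holds trivially, so $r_{m}\leq x+s/x$ for all $m\geq2$.
\item Hence for $n\geq3$, $r_{n}\geq x+\dfrac{s}{x+s/x}$, which is exactly the claimed bound.
\end{enumerate}
The range $n>2$ is exactly what this chain needs: at $n=2$ the ratio is only $x$, which falls short of the bound.

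\textbf{Main obstacle.} The only delicate point is the bookkeeping of base cases and positivity. Positivity of $U_{n-1}$ is needed to divide, and holds since $U_{1}=1$ and the sequence is increasing. Once that is in place, the two-sided squeeze $x\leq r_{m}\leq x+s/x$ makes the $y<0$ bound a one-line consequence.
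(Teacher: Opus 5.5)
Your proof is correct, but it takes a different route from the paper's. The paper treats the ratios $r_n=U_n/U_{n-1}$ as convergents of the continued fraction of $\alpha$. From $U_n^2-U_{n-1}U_{n+1}=y^{n-1}$ it gets $r_n-r_{n-1}=-y^{n-2}/(U_{n-2}U_n)$. For $y<0$ it then invokes the familiar zigzag picture: even-indexed ratios are underestimates of $\alpha$ and increase, odd-indexed ones are overestimates and decrease, and $r_3\geq r_4$. Hence the minimum over $n\geq3$ is $r_4=U_4/U_3=x-\frac{xy}{x^2-y}$. For $y>0$ it notes that the ratios decrease from $r_2=x>\alpha$. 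Both halves lean on standard facts about convergents, in particular that they converge to $\alpha$, which the paper does not spell out; without that, a decreasing sequence starting above $\alpha$ need not stay above $\alpha$.

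Your argument avoids this and is more self-contained. For $y>0$ the exact identity $U_n-\alpha U_{n-1}=\beta^{n-1}\geq0$ does everything at once. Since it also follows from $U_n-\alpha U_{n-1}=\beta(U_{n-1}-\alpha U_{n-2})$ and $U_1-\alpha U_0=1$, it holds even when $D=0$, so your separate check of $x=2$, $y=1$ is not needed. For $y<0$, the squeeze $x\leq r_m\leq x+s/x$ for all $m\geq2$ is fed once more through the decreasing map $r\mapsto x+s/r$. This replaces the whole convergent analysis and needs only that the $U_n$ are positive. What the paper's viewpoint adds is the global ordering of all ratios relative to $\alpha$, which explains why $U_4/U_3$ is the natural constant. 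You lose no sharpness, though: $r_3=x+s/x$ exactly, so your bound is attained at $n=4$, just as in the paper.

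One small slip: with $y=-s$, the hypothesis $|y+1|\leq x$ gives $s\leq x+1$, not $s\leq x-1$. This is harmless, because your $y<0$ argument uses only $s\geq1$ and $x\geq1$.
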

\begin{proof}
The rational numbers $\frac{U_{n}}{U_{n-1}}$ are the convergents of the generalized continued fraction expansion of $\alpha$, $$\alpha=x+\cfrac{-y}{x+\cfrac{-y}{x+\cfrac{-y}{x+\dots}}}.$$ By Lemma \ref{Identities}\ref{id3a}, we have $\frac{U_{n}}{U_{n-1}}=\frac{U_{n-1}}{U_{n-2}}-\frac{y^{n-2}}{U_{n-2}U_{n}}$, and so when $y<0$, the even convergents (i.e. the rational numbers $\frac{U_{n}}{U_{n-1}}$ with $n$ even) are underestimates for $\alpha$ and are less than each subsequent convergent, while the odd convergents are overestimates and greater than each subsequent convergent. In addition, $\frac{U_{3}}{U_{2}}\geq\frac{U_{4}}{U_{3}}$, and so $$\frac{U_{n}}{U_{n-1}}\geq\frac{U_{4}}{U_{3}}=x-\frac{xy}{x^{2}-y}$$ for all $n\geq3$.

When $y\geq0$ and $n\geq2$, the ratios $\frac{U_{n}}{U_{n-1}}$, are all overestimates of $\alpha$.  To see this, recall Lemma \ref{Identities}\ref{id3a} again $\frac{U_{n}}{U_{n-1}}=\frac{U_{n-1}}{U_{n-2}}-\frac{y^{n-2}}{U_{n-2}U_{n}}$, and observe that $\alpha<x=\frac{U_{2}}{U_{1}}$.
\end{proof}
Let $\log_{\alpha}$ denote the logarithm base $\alpha$. We will need the following estimates, which when $x=1$ are those of Lemma 6 in \cite{Luca1}.
\begin{lemma}\label{EqBounds}
Given a solution in positive integers $(n,k,p,q,x,y)$ to equation \eqref{MainEq}, with $y<0$ satisfying $|y+1|\leq x$, we have
\begin{itemize}
\item{} $\log_{\alpha}(k)+p\log_{\alpha}\left(x\right)+(k-2)p<(n-1)q$ \text{if $k\geq2$, and}
\item{} $q\log_{\alpha}(x)+(n-2)q<(k-1)p+\log_{\alpha}\left(k\right)+\log_{\alpha}\left( 1+\frac{x^{2}-2y}{x^{p-1}\left(x^{3}-x^{2}-2xy+y\right)}\right)$\text{ provided $k\geq3$.}
\end{itemize}
\end{lemma}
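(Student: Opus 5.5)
Both inequalities come from comparing the two sides of \eqref{MainEq} using the bounds of Lemma~\ref{UVBoundsNew} for $y\le 0$, namely $x\alpha^{m-2}\le U_m\le \alpha^{m-1}$ for $m\ge 2$, and then taking $\log_\alpha$. This is legitimate because $\alpha>1$ when $y<0$: here $\alpha\ge x\ge 1$, with equality only if $x=1$, $y=0$. First I would dispose of $n=1$. In that case $U_n^q=1$, while the left side is at least $1+2U_2^p\ge 3$ for $k\ge 2$, so $n\ge 2$ and Lemma~\ref{UVBoundsNew} applies to $U_n$.

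\emph{First inequality.} Since $k\ge 2$ and every $U_j$ is positive, dropping all terms except $j=k$ gives a strict inequality:
\[
U_n^q=\sum_{j=1}^k jU_j^p>kU_k^p\ge k\,x^p\alpha^{(k-2)p}.
\]
On the other side, $U_n^q\le \alpha^{(n-1)q}$. Combining the two and taking $\log_\alpha$ gives $\log_\alpha k+p\log_\alpha x+(k-2)p<(n-1)q$.

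\emph{Second inequality.} Now assume $k\ge 3$. The lower bound $U_n^q\ge x^q\alpha^{(n-2)q}$ gives the left side after taking logarithms. For the upper bound I would write
\[
\sum_{j=1}^k jU_j^p\le k\Big(U_k^p+\sum_{j=1}^{k-1}U_j^p\Big)
\]
and control the tail by a geometric series, using Lemma~\ref{ULowerBd}. For $j\ge 3$ that lemma gives $U_j/U_{j-1}\ge r$, where
\[
r:=x-\frac{xy}{x^2-y}=\frac{x(x^2-2y)}{x^2-y},\qquad r-1=\frac{x^3-x^2-2xy+y}{x^2-y}>0 .
\]
Hence $U_j^p\le r^{-(k-j)p}U_k^p$ for $2\le j\le k$, and
\[
\sum_{j=2}^{k-1}U_j^p\le \frac{U_k^p}{r^p-1}\le \frac{U_k^p}{r^{p-1}(r-1)}\le\frac{(x^2-y)\,U_k^p}{x^{p-1}(x^3-x^2-2xy+y)}.
\]
The last step uses $r\ge x$, which holds because $y<0$. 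The remaining term $j=1$ contributes $U_1^p=1$. This is at most $\frac{-y}{x^{p-1}(x^3-x^2-2xy+y)}U_k^p$, since $U_k\ge U_3=x^2-y$ and the needed inequality reduces to something like $x^{p-1}(x^3-x^2-2xy+y)\le -y(x^2-y)^p$ for $k\ge 3$. Adding the two contributions turns the numerator $x^2-y$ into $x^2-2y$, which is exactly the constant in the statement. Finally, $U_k^p\le \alpha^{(k-1)p}$, and taking $\log_\alpha$ yields the second inequality.

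The main obstacle is the bookkeeping for the initial terms. The ratio bound of Lemma~\ref{ULowerBd} only holds from $j=3$ on, since $U_2/U_1=x<r$. So the $j=1$ term, and possibly the $j=2$ term, must be absorbed by hand into the slack between $x^2-y$ and $x^2-2y$. If the crude inequality above for the $j=1$ term fails for small parameters (for instance $p=1$ with $x$ large), I would instead bound the tail more carefully. One option is to use $r^p-1\ge r^{p-1}(r-1)$ with $r^{p-1}$ kept exact rather than replaced by $x^{p-1}$. Another is to handle $k=3$ separately by direct computation with $U_1=1$, $U_2=x$, $U_3=x^2-y$.
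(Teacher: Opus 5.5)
Your first inequality is exactly the paper's argument. In the second, your bound for the terms $2\le j\le k-1$ is also correct, and even slightly sharper than the paper's.

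\textbf{The gap.} The problem is the absorption of the $j=1$ term. Write $s=-y$. The inequality you need is
\[
x^{p-1}(x^3-x^2+2xs-s)\le s(x^2+s)^p .
\]
It does hold for $p\ge 2$, since $s(x^2+s)^p\ge sx^{2p}+ps^2x^{2p-2}\ge x^{p+2}+2sx^{p}$. It is false for $p=1$: at $y=-1$ it reads $x^3-x^2+2x-1\le x^2+1$, which fails for every $x\ge 2$ (for example $7>5$ at $x=2$). Since the lemma is claimed for all $p\ge 1$, the argument as written does not prove it.

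\textbf{Your fallbacks.} Keeping $r^{p-1}$ exact changes nothing when $p=1$. Treating $k=3$ separately does work. For $p=1$ the needed inequality becomes $(1+x)(x^3-x^2+2xs-s)\le (x^2+s)(x^2+2s)$, and the difference of the two sides is $x^2+s(x^2-x+1)+2s^2>0$. You must then also handle $k\ge 4$ with $p=1$, by replacing $U_k\ge U_3$ with $U_k\ge U_4=x^3+2xs$. After that, $x^3-x^2+2xs-s\le s(x^3+2xs)$ is immediate. With these two patches your route goes through, but this case analysis is not in your proposal.

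\textbf{How the paper avoids this.} The paper uses one observation. Because $r>x=U_2/U_1$, we have $U_k/U_{k-i}\ge x\,r^{i-1}$ for every $1\le i\le k-1$. For $i=k-1$ the factor $U_2/U_1=x$ actually appears; for smaller $i$ you just replace one factor $r$ by the smaller $x$. So the $j=1$ term sits in the same geometric series as all the others. With $\tau=1/r$,
\[
\sum_{j=1}^{k} jU_j^p< kU_k^p\Bigl(1+x^{-p}\sum_{i\ge 0}\tau^{ip}\Bigr)\le kU_k^p\Bigl(1+\frac{1}{x^{p}(1-\tau)}\Bigr),\qquad \frac{1}{1-\tau}=\frac{x(x^2-2y)}{x^3-x^2-2xy+y}.
\]
The constant $x^2-2y$ therefore comes from the numerator of $r=x(x^2-2y)/(x^2-y)$. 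It does not come from adding a contribution $-y$ for $j=1$ to $x^2-y$. Your split of $x^2-2y$ as $(x^2-y)+(-y)$ is only a coincidence of bookkeeping, and that is why it breaks at $p=1$.
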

\begin{proof}
The first inequality follows from bounds in Lemma \ref{UVBoundsNew} along with the fact that $kU_{k}^{p}<U_{n}^{q}$ for $k\geq2$. If we set $x=1,y=-1$ and note that $U_{k}^{p}<kU_{k}^{p}$, then we obtain the first inequality in Lemma 6 of \cite{Luca1}. 

To prove the second inequality, we apply Lemma \ref{ULowerBd} to obtain the inequality 
\[
\frac{U_{k}}{U_{k-i}}=\prod_{j=0}^{i-1}\left(\frac{U_{k-j}}{U_{k-j-1}}\right)\geq x\left(\frac{x^{3}-2xy}{x^2-y}\right)^{i-1},
\]
which applies for all $1\leq i\leq k-1$ provided $k\geq3$. Set $\tau:=\frac{x^2-y}{x^{3}-2xy}$. From Lemma \ref{UVBoundsNew} we have
\begin{align*}
x^q\alpha^{\left(n-2\right)q}<&U_{n}^{q}=kU_{k}^{p}\sum_{j=0}^{k-1}\frac{k-j}{k}\left(\frac{U_{k-j}}{U_{k}}\right)^{p}\\
<&kU_{k}^{p}\left( 1+x^{-p}+x^{-p}\tau^{p}+x^{-p}\tau^{2p}+\dots+x^{-p}\tau^{p(k-2)}\right)\\
<&kU_{k}^{p}\left(1+\frac{1}{x^{p}}\left(\frac{x^{3}-2xy}{x^{3}-x^2-2xy+y}\right)\right)\\
\end{align*}
Taking logarithms gives the result in the statement of the lemma.
\end{proof}

\begin{lemma}\label{Y1EqBounds}
Given a solution in positive integers $(n,k,p,q,x,y)$ to equation \eqref{MainEq} with $y>0$ satisfying $|y+1|\leq x$, we have
\begin{itemize}
\item{} $\log_{\alpha}(k)+p\log_{\alpha}(x)+(k-2)p<q\left(n-\log_{\alpha}\left(\sqrt{D}\right)\right)$ \text{if $k\geq2$, and}
\item{} $q\log_{\alpha}(x)+(n-2)q<\log_{\alpha}(k)+p\left(k+1-\log_{\alpha}\left(\sqrt{D}\right)\right)-\log_{\alpha}\left(\alpha^{p}-1\right)$\text{ provided $k\geq3$.}
\end{itemize}
\end{lemma}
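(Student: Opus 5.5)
The plan is to mirror the proof of Lemma \ref{EqBounds}, replacing the $y<0$ estimates with the $y>0$ estimates from Lemmas \ref{UVBoundsNew} and \ref{ULowerBd}.

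\emph{First inequality.} For $k\geq2$ the sum has at least two positive terms, so $kU_{k}^{p}<U_{n}^{q}$. Since $k\geq2$, the left inequality of \eqref{UBoundsNew} gives $U_{k}\geq x\alpha^{k-2}$. Since $y\geq0$, the right inequality of \eqref{UBoundsNew} gives $U_{n}\leq\alpha^{n}/\sqrt{D}$. This requires $D>0$, which holds because $y>0$ and $|y+1|\leq x$ exclude only $x=2,y=1$. Combining,
\[
k x^{p}\alpha^{(k-2)p}<\alpha^{nq}D^{-q/2}.
\]
Taking $\log_{\alpha}$ yields the first inequality directly, provided $\alpha>1$. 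If $\alpha=1$ the logarithm is undefined, but $\alpha\geq x-1$ and $D>0$ together force $\alpha>1$ in the admissible cases.

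\emph{Second inequality.} Here $k\geq3$. Write
\[
U_{n}^{q}=\sum_{j=1}^{k}jU_{j}^{p}\leq kU_{k}^{p}\sum_{i=0}^{k-1}\left(\frac{U_{k-i}}{U_{k}}\right)^{p}.
\]
By Lemma \ref{ULowerBd} with $y>0$, each ratio $U_{m}/U_{m-1}\geq\alpha$ for $m\geq2$. Telescoping gives $U_{k-i}/U_{k}\leq\alpha^{-i}$ for $0\leq i\leq k-1$. The sum is therefore at most the geometric series
\[
\sum_{i\geq0}\alpha^{-ip}=\frac{\alpha^{p}}{\alpha^{p}-1}.
\]
Next bound $U_{k}^{p}\leq\alpha^{kp}D^{-p/2}$ using \eqref{UBoundsNew}, and bound $U_{n}^{q}\geq x^{q}\alpha^{(n-2)q}$, which is valid since $n\geq2$ (otherwise $U_{n}^{q}=1$ is too small for $k\geq3$). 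This gives
\[
x^{q}\alpha^{(n-2)q}<k\,\alpha^{kp}D^{-p/2}\frac{\alpha^{p}}{\alpha^{p}-1}.
\]
Taking $\log_{\alpha}$ produces exactly
\[
q\log_{\alpha}x+(n-2)q<\log_{\alpha}k+p\bigl(k+1-\log_{\alpha}\sqrt{D}\bigr)-\log_{\alpha}(\alpha^{p}-1).
\]

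\emph{Main obstacle.} The estimates themselves are routine. The care goes into two points:
\begin{itemize}
\item checking that every inequality invoked is stated for the index range used: Lemma \ref{ULowerBd} needs $m\geq2$, and \eqref{UBoundsNew} needs $n,k\geq2$;
\item keeping the inequalities strict. Strictness follows because the sum has at least two positive terms, so replacing it by $kU_k^p$ times a partial sum of an infinite geometric series is a strict bound.
\end{itemize}
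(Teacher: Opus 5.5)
Your proposal is correct and follows the paper's proof essentially step for step: the first inequality comes from $kx^p\alpha^{(k-2)p}\le kU_k^p<U_n^q\le D^{-q/2}\alpha^{nq}$ via Lemma~\ref{UVBoundsNew}, and the second from telescoping the ratio bound $U_m/U_{m-1}\geq\alpha$ of Lemma~\ref{ULowerBd} into the geometric series $\alpha^p/(\alpha^p-1)$, then applying Lemma~\ref{UVBoundsNew} to $U_k^p$ and $U_n^q$. The differences are cosmetic: you bound the weights by $j\le k$ directly instead of writing the sum exactly as $kU_k^p\sum_{j=0}^{k-1}\frac{k-j}{k}(U_{k-j}/U_k)^p$, and you state explicitly the conditions $D>0$, $\alpha>1$ and $n\geq2$ that the paper leaves implicit.
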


\begin{proof}
For a solution to \eqref{MainEq} with $k\geq2$, we must have $U_{k}^{p}<U_{n}^{q}$, and so $kx^{p}\alpha^{p(k-2)}<kU_{k}^{p}<U_{n}^{q}<D^{-q/2}\alpha^{qn}.$ Taking logarithms, the first inequality follows.

We may prove the second inequality in a manner similar to the one employed in Lemma \ref{EqBounds}. Set $\tau=\frac{1}{\alpha}$. By Lemma \ref{ULowerBd} we have $$\frac{U_{k}}{U_{k-i}}=\prod_{j=0}^{i-1}\frac{U_{k-j}}{U_{k-j-1}}\geq\alpha^{i},$$ applies for all $1\leq i\leq k-1$, provided $k\geq2$, it follows that
\begin{align*}
x^{q}\alpha^{(n-2)q}<&U_{n}^{q}=kU_{k}^{p}\sum_{j=0}^{k-1}\frac{k-j}{k}\left(\frac{U_{k-j}}{U_{k}}\right)^{p}\\
<&kU_{k}^{p}\left(1+\tau^{p}+\tau^{2p}+\dots+\tau^{(i-1)p}\right)\\
<&kU_{k}^{p}\frac{1}{1-\tau^{p}}
\end{align*}
As before, taking logarithms gives the result.
\end{proof}

The following result is an amalgamation of part of Theorem 1.5 in \cite{Sanna} and Lemma 3.1 in the same work. It will allow us to determine the precise divisibility of $U_{n}$ by powers of $D$, which will be useful in the proof of Lemma \ref{SecondKindEquation}.
\begin{lemma}[Sanna]\label{pValuation}
For any prime $p_{i}$, let $\nu_{p_{i}}\left(n\right)$ denote the $p_{i}$-adic valuation of the integer $n$. Suppose that $p_{1},p_{2},\dots,p_{m}$ are the primes dividing $D$. Then $$\nu_{p_{i}}\left({U_{n}}\right)=\begin{cases}\nu_{p_{i}}\left(n\right)+\nu_{p_{i}}\left(U_{p_{i}}\right)-1&\text{ if $p_{i}\mid n$,}\\ 0&\text{ if $p_{i}\nmid n$.}\end{cases}$$
Moreover, if $p_{i}\geq5$, then $\nu_{p_{i}}\left(U_{p_{i}}\right)=1$.
\end{lemma}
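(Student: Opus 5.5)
This is a $p$-adic valuation statement for primes $p_i \mid D$ in a Lucas sequence, so the natural plan is to work prime by prime with the binomial expansion of $U_n$ in powers of $D$. Throughout, fix a prime $\ell=p_i$ dividing $D$. We assume, as is implicit in the setting of Sanna, that $\gcd(x,y)=1$; this certainly holds for $y=\pm1$.

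\textbf{Step 1: the expansion.} From the Binet formula, writing $\alpha=(x+\sqrt D)/2$ and $\beta=(x-\sqrt D)/2$, we get
\[
2^{n-1}U_n=\sum_{j\ge0}\binom{n}{2j+1}x^{n-2j-1}D^{j}.
\]

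\textbf{Step 2: $\ell$ does not divide $x$.} If $\ell$ is odd, then $\ell\mid D=x^2-4y$ together with $\ell\mid x$ would force $\ell\mid y$, contradicting coprimality. If $\ell=2$, then $2\mid D$ forces $x$ even, so this case must be handled separately (see Step 5).

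\textbf{Step 3: odd $\ell$, $\ell\nmid n$.} Reducing the expansion modulo $\ell$ leaves only the $j=0$ term, so
\[
2^{n-1}U_n\equiv n x^{n-1}\not\equiv 0 \pmod{\ell}.
\]
Hence $\nu_\ell(U_n)=0$.

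\textbf{Step 4: odd $\ell$, $\ell\mid n$.} I would reduce to prime-power indices via the multiplicativity $U_{mn}=U_n\,U_m(V_n,y^n)$. This follows from the Binet formula, since $\alpha^n$ and $\beta^n$ are the roots of $t^2-V_nt+y^n$. The discriminant of the new sequence is $V_n^2-4y^n=DU_n^2$, which is still divisible by $\ell$.

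First take $m$ with $\ell\nmid m$. Step 3, applied to the sequence $U_m(V_n,y^n)$, shows this factor is an $\ell$-adic unit. This needs $\ell\nmid V_n$; that holds because $V_n^2\equiv 4y^n\pmod{\ell}$ and $\ell\nmid y$.

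It then suffices to show $\nu_\ell(U_{\ell n})=\nu_\ell(U_n)+1$ whenever $\ell\mid n$. Apply the expansion to $U_\ell(V_n,y^n)$, with discriminant $D'=DU_n^2$ where $\ell\mid U_n$. The $j=0$ term is $\ell V_n^{\ell-1}$, of valuation exactly $1$. The terms with $1\le j<(\ell-1)/2$ carry the factor $\binom{\ell}{2j+1}$, divisible by $\ell$, and also $D'^{\,j}$, so they have valuation at least $2$. The last term, $j=(\ell-1)/2$, is $D'^{(\ell-1)/2}$, which has valuation at least $3(\ell-1)/2\ge 3$ for $\ell\ge3$. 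So $\nu_\ell(U_\ell(V_n,y^n))=1$.

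Combining these: $\nu_\ell(U_{\ell^a m})=\nu_\ell(U_\ell)+(a-1)$ for $\ell\nmid m$, which is the claimed formula.

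\textbf{Step 5: the main obstacle.} The hardest part is $\ell=2$, where $x$ is even and the factor $2^{n-1}$ interferes. I would first handle $\ell=2$ directly from the recurrence when $y$ is odd. When $n$ is odd, $U_n\equiv U_1=1\pmod 2$. When $n$ is even, use $U_{2n}=U_nV_n$ together with $V_n^2=DU_n^2+4y^n$ to find $\nu_2(V_n)=1$ for $n$ even. I would then iterate as in Step 4.

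If this case resists, the statement is cited from Sanna, so I would ultimately rely on that reference for $\ell=2$.

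\textbf{Step 6: the last claim, $\ell\ge5$.} Take $n=\ell$ in the original sequence. The $j=0$ term of the expansion is $\ell x^{\ell-1}$, of valuation exactly $1$. The terms with $1\le j\le(\ell-3)/2$ have valuation at least $1+j\ge2$. The last term is $D^{(\ell-1)/2}$, with valuation at least $(\ell-1)/2\ge2$ precisely because $\ell\ge5$. Hence $\nu_\ell(U_\ell)=1$. This also explains why $\ell=3$ is excluded: there $D^{1}$ may have valuation $1$ and cancel against the $j=0$ term.
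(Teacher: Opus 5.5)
The paper does not prove this lemma; it imports it from Sanna (Theorem 1.5 and Lemma 3.1 there). Your argument is therefore a genuinely different, self-contained route, and it is correct.

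You combine the expansion $2^{n-1}U_n=\sum_j\binom{n}{2j+1}x^{n-2j-1}D^j$ with the factorization $U_{mn}=U_n\,U_m(V_n,y^n)$, whose new discriminant is $DU_n^2$. This gives a lifting-the-exponent argument: indices prime to $\ell$ contribute an $\ell$-adic unit, and each further factor $\ell$ in the index contributes exactly one factor $\ell$. The citation buys the paper brevity and Sanna's full generality, including primes not dividing $D$. Your route buys transparency about the hypotheses. You are right that $\ell\nmid y$ is needed, which the paper's phrasing omits: for instance $x=3$, $y=-3$ gives $3\mid D=21$ but $\nu_3(U_2)=1$ with $3\nmid 2$. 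The condition is automatic for $y=\pm1$. Your Step 6 also shows exactly why $\ell=3$ must be excluded from the last claim.

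There are a few points to tighten.

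In Step 4 you use $\ell\mid U_n$ for $\ell\mid n$ without saying why. It holds because every term of $2^{\ell-1}U_\ell$ in the expansion is divisible by $\ell$, so $\ell\mid U_\ell$, and $U_\ell\mid U_n$ by your factorization. This is genuinely needed only for $\ell=3$, where the last term is $D'$ itself and you need $\nu_3(D')\ge 2$.

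Step 5 is already complete and needs no fallback on the reference. You show $\nu_2(V_n)=1$ for even $n$. Writing $n=2^am$ with $m$ odd, the factor $U_m(V_{2^a},y^{2^a})$ therefore has even first parameter and odd second parameter. It is then odd by the same parity argument you used for $U_n$ with $n$ odd, so $\nu_2(U_n)=\nu_2(U_{2^a})=\nu_2(U_2)+a-1$. In the paper's application $D$ is odd, so $\ell=2$ never arises anyway.

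Finally, the factorization divides by $U_n$, so you are using nondegeneracy. It holds in the paper's setting $x\ge 1$, $D>0$.
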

Assuming that neither $2$ nor $3$ divide $n$ and writing the prime factorization of $D$, $$D=p_{1}^{e_{1}}\cdot\dots\cdot p_{m}^{e_{m}},$$ we may use $\nu_{D}(N)=\min\limits_{1\leq i\leq m}{\lfloor \nu_{p_{i}}(N)/e_{i}\rfloor}$ and Lemma \ref{pValuation} to obtain
\begin{align*}
\nu_{D}(U_{n})=&\min_{1\leq i\leq m}{\lfloor \nu_{p_{i}}(U_{n})/e_{i}\rfloor}\\
=& \min_{1\leq i\leq m}{\lfloor \nu_{p_{i}}(n)/e_{i}\rfloor}\\
=&\nu_{D}(n).
\end{align*}
The following result extends Lemma 8 in \cite{Luca1}.
\begin{lemma}\label{SecondKindEquation}
If $(p,q,k,x)$ is a solution in positive integers to $$V_{p}(x)k^{2}+(V_{p}(x)-2)k-1=\pm D^{p-q}V_{p}(x)^{2},$$ with $p$ odd and $y=-1$, then $(p,q,k,x)=(1,1,1,1),(1,1,2,1),(1,1,2,5)$.
\end{lemma}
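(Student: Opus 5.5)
The plan is to treat the equation as a quadratic in $k$ and combine divisibility by $V_p$ with size estimates. Throughout, $y=-1$, $D=x^2+4$, $V:=V_p(x)$ with $p$ odd, and $\epsilon=\pm1$ is the sign in the statement.

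\textbf{Step 1: normalise.} Multiplying by $4$ and completing the square gives
\[
4\bigl(Vk^2+(V-2)k-1\bigr)=V(2k+1)^2-4(2k+1)-V .
\]
Put $N=2k+1$. The equation becomes
\[
VN^2-4N-V=4\epsilon D^{p-q}V^2 .
\]
Two facts will be used repeatedly. By Lemma \ref{Identities}\ref{id4} with $y^p=-1$, $V^2=DU_p^2-4$. Hence $\gcd(V,D)\mid 4$, and $V^2\equiv -4 \pmod D$. Also, by Lemma \ref{Identities}\ref{id5} and the analogous expansion of $V_p$, $V\equiv 0\pmod x$ and $V$ is odd when $x$ is odd.

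\textbf{Step 2: the case $p\ge q$.} The right side is divisible by $V^2$, so $V\mid 4N$. Since $N$ is odd, this gives $V\mid N$ up to a factor $2$ or $4$ (handled by a parity check on $x$). Write $N=mV$. Dividing by $V$ gives
\[
V^2m^2-4m-1=4\epsilon D^{p-q}V .
\]
Reducing mod $V$ gives $V\mid 4m+1$, so $m\ge (V-1)/4$. The left side is then of size at least about $V^4/16$. The right side is $4D^{p-q}V\le 4D^{p}V$, and by Lemma \ref{UVBoundsNew} we have $V\ge x\alpha^{p-1}$ with $D<(\alpha+1)^2$. Comparing these should force $p-q$ to be large, of order $p$.

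To finish this case, descend once more: write $4m+1=lV$, substitute, and reduce modulo $D$ using $V^2\equiv-4$. This gives a congruence on $m$ (equivalently on $l$) modulo $D$ that contradicts the size constraints unless $p=1$.

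\textbf{Step 3: the case $p<q$.} Here $D^{q-p}\bigl(VN^2-4N-V\bigr)=\pm 4V^2$. Since $\gcd(D,V)\mid4$, the power $D^{q-p}$ must essentially divide $4$, which is impossible for $D=x^2+4\ge5$ once $q>p$, apart from a small-$x$ check. Equivalently, viewed as a quadratic in $k$, the discriminant $D U_p^2+4\epsilon D^{p-q}V^3$ must be a square, and the divisibility can be read off from Lemma \ref{pValuation}.

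\textbf{Step 4: $p=1$.} Then $V=x$ and $D^{p-q}$ forces $q=1$. We solve $xk^2+(x-2)k-1=\pm x^2$ directly. Reducing mod $x$ gives $x\mid 2k+1$, and a short bounded search yields exactly $(1,1,1,1)$, $(1,1,2,1)$, $(1,1,2,5)$. For instance, $x=5$, $k=2$ gives $20+6-1=25$.

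\textbf{Main obstacle.} I expect the descent in Step 2 for $p\ge3$ to be the hard part. The equation does not collapse after one reduction, and the required inequality must hold uniformly in $x$. I would first try to pin down $p-q$ exactly by comparing $\log_\alpha$ of both sides using Lemma \ref{UVBoundsNew}. The goal is to show that $D^{p-q}$ lies strictly between two consecutive admissible values of $V^3m^2/(4V^2)$, or else to obtain a congruence contradiction modulo $x$ or modulo $D$.
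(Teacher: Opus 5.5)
There is a genuine gap in Step~2, which is the substantive case $p\ge 3$. Your size comparison, made precise, gives $V(V-1)^2<128D^{r}$ with $r=p-q$. That is only a \emph{lower} bound on $r$. For $x\ge 3$ it already contradicts $r\le p-1$, since $V^3\approx\alpha^{3p}$ while $D^{p-1}\approx\alpha^{2p-2}$, and you should have concluded that rather than ``of order $p$''. But the lemma also covers $x=1$. There $D=5$, $V=L_p\approx\phi^{p}$ and $\phi^{3}<5$, so the inequality is compatible with every $r\le p-1$.

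Your proposed finish cannot close this. Since $V^2\equiv-4\pmod D$, the equation $V^2m^2-4m-1=4\epsilon D^{r}V$ read modulo $D$ says exactly $D\mid(2m+1)^2$; the version in $l$ is equivalent. This is always satisfiable and carries no size information, so no congruence modulo $D$ alone can bound $r$.

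The missing idea is an \emph{upper} bound on $r$, logarithmic in $p$, coming from how little of $D$ divides $U_p$:
\begin{itemize}
\item Using $V^2=DU_p^2-4$, the equation becomes $(mV^2-2)^2=4D^{r}V^3+DU_p^2$. This is the square discriminant you wrote in Step~3, but it is needed here, not in the case $p<q$.
\item Reading the original equation modulo $V$ gives $V\mid 2k+1$ directly, so $V$ is odd. Hence $x$ and $D$ are odd, $\gcd(V,D)=1$, and $3\nmid p$.
\item Take a prime $\ell$ dividing $D$ to an odd power $e$; one exists since $x^2+4$ is never a square. If $re>e+2\nu_\ell(U_p)$, the right side would have odd $\ell$-adic valuation $e+2\nu_\ell(U_p)$, impossible for a square. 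So $re\le e+2\nu_\ell(U_p)$.
\item Lemma~\ref{pValuation}, using that $D$ is odd and $3\nmid p$, gives $\nu_\ell(U_p)=\nu_\ell(p)$.
\end{itemize}
For $x=1$ this reads $r\le 1+2\nu_5(p)$, and then $V(V-1)^2<128D^{r}$ fails for all $p\ge 7$. The remaining small cases ($p\in\{3,5\}$, $x\in\{1,2\}$ in the paper) are settled by solving the quadratic in $k$. This valuation step is exactly how the paper concludes, and it is absent from your plan.

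Steps~3 and~4 are fine. Step~3 is simpler than you make it: integrality of the left side forces $D^{q-p}\mid V^2$, which is impossible because $V^2\equiv-4\pmod D$ and $D\ge5$.
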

\begin{proof}
First, we note that $V_{p}(x)^2$ can never be a multiple of $D$, 
and since $D^{p-q}V_{p}(x)^{2}$ is an integer, we must have $p-q\geq0$. If $p=1$, then $q=1$ and the equation becomes $$xk^2+(x-2)k-1=\pm x^2.$$ We can rearrange this to get $$x(k^2+k\mp x)=2k+1.$$ From this, we see that both factors on the left side are less than or equal to $2k+1$. That is
\begin{align*}
x\leq&2k+1\text{, and}\\
k^2+k\mp x\leq&2k+1.
\end{align*}
If the sign on the left is a $+$, then we obtain $$x\leq -k^2+k+1,$$ which leaves $x=1,k=1$ as the only possibility. If the sign on the left is a $-$, then we obtain $$k^2-k-1\leq x\leq 2k+1,$$ and so $k^{2}-3k-2\leq0$, which means we must have $k<4$. A brief check reveals $k=2,x=1$ and $k=2,x=5$ as the only two possibilities, and so we have obtained all solutions listed in the statement of the lemma.

It remains to show that there are no additional solutions. When $p\geq3$, the left hand side is always positive, and so the sign on the right is a $+$. The cases with $p\in\{3,5\}$ and $x\in\{1,2\}$ may be checked by substituting those particular values into the equation and solving the resulting quadratic in $k$. No additional solutions arise in those cases. Hereafter, we assume that $x\geq 3$ or $p\geq7$. Set $r=p-q$. From $$V_{p}(x)k^2+\left(V_{p}(x)-2\right)k-1=D^{r}V_{p}(x)^2,$$ we see that $V_{p}(x)$ divides $2k+1$, which means that $V_{p}(x)$ is odd, and so $x$ must be odd and $p$ cannot be a multiple of $3$. Moreover, $2k+1=aV_{p}(x)$ holds for the odd integer $a=k^2+k-D^rV_{p}(x)$, and so $k=(aV_{p}(x)-1)/2.$ Substituting, we get 
\begin{equation}\label{NewEq}
a^2V_{p}(x)^2-(4a+1)=4D^rV_{p}(x).
\end{equation}
Some manipulation of equation \eqref{NewEq} allows us to obtain
\begin{align*}
a^2V_{p}(x)^{4}-4aV_{p}(x)^{2}+4=&4+V_{p}(x)^{2}(4D^{r}V_{p}(x)+1)\\
=&4D^{r}V_{p}(x)^{3}+(V_{p}(x)^{2}+4)\\
=&4D^rV_{p}(x)^3+DU_{p}(x)^{2},
\end{align*}
and so
\begin{equation}\label{EqualtoSquareEqn}
4D^rV_{p}(x)^3+DU_{p}(x)^{2}=m^2
\end{equation}
for $m=aV_{p}(x)^2-2$. Equation \eqref{NewEq} also allows us to see that $V_{p}(x)\mid(4a+1)$, and so $a\geq(V_{p}(x)-1)/4\geq7$. Since $V_{p}(x)\geq29$, we have $4a+1<5a<a^2V_{p}(x)^{2}/2$. It follows that $$4D^rV_{p}(x)=a^{2}V_{p}(x)^{2}-(4a+1)>\frac{a^{2}V_{p}(x)^{2}}{2},$$ which gives us $$a<\frac{2^{3/2}D^{r/2}}{V_{p}(x)^{1/2}},$$ and since $a\geq\frac{V_{p}-1}{4}$, we get 
\begin{equation}\label{RootVInequality}
(V_{p}(x)-1)V_{p}(x)^{1/2}<2^{7/2}D^{r/2}.
\end{equation}
Now let $c,d$ be such that $D^{c}\mid\mid U_{p}(x)$ and $D^{d}\mid\mid m$. From \eqref{EqualtoSquareEqn} and Lemma \ref{pValuation}, we must have $U_{p}(x)=D^{c}u$ and $m=D^{d}v$ for some integers $u,v$ with both $\gcd\left(u,D\right)<D$ and $\gcd\left(v,D\right)<D$. Note that Lemma \ref{pValuation} applies here because we are assuming that $p$ is odd, so $2\nmid p$ and we have established $V_{p}(x)$ is odd, so $3\nmid p$. Moreover, since we have established that $x$ is odd, we must have that $D$ is odd, so $\gcd\left(4V_{p}(x)^{3},D\right)=1.$ From $$4D^rV_{p}(x)^{3}+D^{2c+1}u^{2}=D^{2d}v^{2},$$ and since $D\nmid v$, we see that either $r=2c+1$ or $r=2d$ and in either case, $$r\leq 2c+1=2\nu_{D}(U_{p}(x))+1=2\nu_{D}(p)+1\leq 2\log_{D}p+1,$$ where the equality follows from Lemma \ref{pValuation}.
Hence $D^{r}\leq Dp^{2}$. From \eqref{RootVInequality}, we then have that $$V_{p}(x)^{1/2}\left(V_{p}(x)-1\right)\leq2^{7/2}D^{r/2}\leq 2^{7/2}D^{1/2}p,$$ and since from the inequalities \eqref{VBoundsNew-} in Lemma \ref{UVBoundsNew}, $V_{p}(x)\geq x\alpha^{p-1}$, it follows that $$x^{1/2}\alpha^{\left(p-1\right)/2}\left(x\alpha^{p-1}-1\right)\leq 2^{7/2}D^{1/2}p=2^{7/2}\left(2\alpha-x\right)p,$$ which is false whenever $x\geq3,p\geq3$, when $x=2,p\geq5$ and when $x=1,p\geq7$ as in \cite{Luca1}. It follows that there are no more solutions.
\end{proof}

\section{The case $k=2$}\label{k2Case}

In \cites{Luca1,Luca2}, it was assumed that $k\geq3$, which may be done when working exclusively with the Fibonacci numbers since the fact that $F_{1}=F_{2}=1$ makes solving the equation incredibly easy in the case $k=2$. However, the more general family of Lucas sequences under consideration requires a little care in this situation. We list all the solutions to \eqref{MainEq} with $k=2$ and $y=\pm1$ in the Proposition \ref{MainProposition}. Some comments on the choice of restriction on $y$ will follow the proof.
\begin{proposition}\label{MainProposition}
The only solutions to the equation $1+2x^p=U_{n}(x,y)^{q}$ in positive integers $n,p,q,x$ and with $y=\pm1$ are $$(n,p,q,x,y)=(4,p,1,1,-1), (1+2^{p+1},p,1,2,1) ,(3,2,2,2,1) ,(3,1,1,2,-1) \text{ and } (5,3,1,3,1).$$
\end{proposition}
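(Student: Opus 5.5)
We first dispose of the small parameters. For $x=1,y=-1$ we have $U_n=F_n$ and the equation reads $3=F_n^q$, so $F_n=3$, $q=1$, $n=4$; this gives $(4,p,1,1,-1)$. For $x=1,y=1$ the sequence only takes the values $0,\pm1$, so there is no solution. For $x=2,y=1$ we have $U_n=n$ and the equation is $n^q=1+2^{p+1}$. If $q=1$ we get $n=1+2^{p+1}$. If $q\ge2$, we have $n^q-1=2^{p+1}$ with $n$ odd. A Catalan-type argument settles this: for $q$ even, factor $(n^{q/2}-1)(n^{q/2}+1)$; the two factors are powers of $2$ differing by $2$, so $n^{q/2}=3$, giving $(3,2,2,2,1)$. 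For $q$ odd, $n^q-1=(n-1)(n^{q-1}+\dots+1)$ and the second factor is odd and greater than $1$, which is impossible. For $x=2,y=-1$ (Pell numbers) we need $P_n^q=1+2^{p+1}$. Mod $4$ considerations plus the fact that Pell numbers are odd exactly for odd $n$ leave a small search. We would treat it like the general case below, obtaining $(3,1,1,2,-1)$ from $P_3=5$.

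For general $x\ge3$ the idea is to reduce modulo $x$ and $x^2$. By Lemma \ref{Identities}\ref{id5}, $U_n\equiv (-y)^{(n-1)/2}\frac{n+1}{2}\cdot$(lowest terms) modulo $x^2$. More precisely, $U_n\equiv(-y)^{(n-1)/2}$ modulo $x^2$ for $n$ odd up to a multiple of $x^2$, and $U_n\equiv (-y)^{(n-2)/2}\frac{n}{2}x \pmod{x^2}$ for $n$ even. Since $1+2x^p\equiv1\pmod x$, $n$ must be odd and $(-y)^{q(n-1)/2}=1$. Then Lemma \ref{Identities}\ref{id5} modulo $x^2$ gives
\[
U_n\equiv (-y)^{\frac{n-1}{2}}\Bigl(1-y\tbinom{(n+1)/2}{2}\cdot\text{(adjusted)}\,x^2\Bigr)\pmod{x^3}.
\]
In general we get $U_n^q\equiv 1+c\,x^2\pmod{x^3}$ for an explicit $c$ depending on $n,q,y$. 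Comparing with $1+2x^p$ then forces constraints: for $p=1$ we need $2x\equiv0\pmod{x^2}$, so $x\mid2$, which is already handled. For $p\ge2$ the congruence modulo $x^3$ ties $c$ to $2$ (if $p=2$) or to $0$ (if $p\ge3$), which constrains $n$ and $q$ modulo powers of $x$.

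The main obstacle is turning these congruences into finiteness, since they only restrict residues. The plan is to combine them with size: Lemma \ref{UVBoundsNew} gives $x^q\alpha^{(n-2)q}\le U_n^q=1+2x^p$, while $\alpha\ge x-1$. We therefore expect to push the $x$-adic expansion of $U_n^q$ (via \ref{id5} and the binomial theorem) to order $x^{p}$. We would then show that the lowest nonconstant coefficient is a polynomial in $n,q$ that is nonzero and smaller than $x$ in absolute value unless $n$ or $q$ is huge relative to $x^{p}$, contradicting the growth bound. This leaves finitely many small cases, among which $(5,3,1,3,1)$ ($U_5(3,1)=55=1+2\cdot27$) survives. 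For $q\ge2$ a cleaner alternative is to use $U_n^q-1$ factorizations. Finally, a direct check of the remaining small $n$ (namely $n=3$: $x^2-y=1+2x^p$ and $n=5$: $x^4-3x^2y+1=1+2x^p$) finishes the classification.
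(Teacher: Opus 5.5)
Your treatment of $x=1$ and of $x=2,y=1$ is correct. The factorization of $n^q-1=2^{p+1}$ is a nice elementary substitute for Mih\u{a}ilescu's theorem, and it also disposes of $x=2,y=-1$ with $q\ge2$, since $3$ is not a Pell number.

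\textbf{The gap.} The rest of the proposition is not proved: the Pell case with $q=1$, and the whole case $x\ge3$. You flag ``turning these congruences into finiteness'' as the main obstacle, and the size argument you sketch does not overcome it. Put $n=2m+1$ with $(-y)^{qm}=1$. Lemma \ref{Identities}\ref{id5} gives $U_n^q-1=-yq\binom{m+1}{2}x^2+x^4(\cdots)$. So for $p\ge3$ you learn only $x\mid q\binom{m+1}{2}$; for $q=1$ and $p\ge4$ you get $x^2\mid\frac{(n-1)(n+1)}{8}$.

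The growth bound $x^q\alpha^{(n-2)q}\le 1+2x^p$ only says that $(n-1)q$ is about $p$, and $p$ is a free parameter. Nothing therefore forces $0<q\binom{m+1}{2}<x$. The coefficient reaches size $x$ as soon as $qm^2$ is of order $x$, not only when $n$ or $q$ is ``huge relative to $x^p$''. The divisibility conditions are met by infinitely many $(n,q)$ without any conflict with size. Indeed, at the genuine solution $(5,3,1,3,1)$ the coefficient is exactly $-3=-x$. So your final reduction to $n\in\{3,5\}$ is unjustified.

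The Pell case is likewise not a ``small search''. Every odd-indexed Pell number is $\equiv1\pmod 4$, as is $1+2^{p+1}$, so mod $4$ gives nothing.

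\textbf{What is missing.} You need primitive divisors, used twice.

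First, bound $q$. Since $n$ is odd, $U_n\equiv\pm1\pmod x$ and $U_n$ is odd. Hence every prime dividing $2x^p=U_n^q-1$ already divides $U_n^2-1$. Zsigmondy's theorem then forces $q\le 2$. The case $q=2$ is impossible for $x\ge3$: $x^2$ divides one of $U_n\mp1$, so the other factor is $\equiv\pm2\pmod{x^2}$. That factor then cannot contain any odd prime of $x$, nor more than one factor $2$, so it is at most $2$. This contradicts $U_n\ge U_3=x^2-y\ge 8$.

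Second, bound $n$ when $q=1$. Lemma \ref{Identities}\ref{id3} with $y=\pm1$ factors $U_n-1$ as $U_{(n-1)/2}V_{(n+1)/2}$ or $U_{(n+1)/2}V_{(n-1)/2}$. The $U$-factor divides $2x^p$, so all its prime factors have rank of apparition at most $3$. Carmichael's theorem then gives $n\le 25$. Only now does your congruence $x^2\mid\frac{(n-1)(n+1)}{8}$ finish the argument: it gives $x\le 8$, leaving a finite check.

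For the Pell case the same factorization suffices on its own. $V_m(2,-1)\equiv 2\pmod 4$ for all $m\ge0$, so it is a power of $2$ only for $m\le1$. This forces $n=3$ and yields $(3,1,1,2,-1)$.
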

\begin{proof}
When $x=y=1$, the equation becomes $3=U_n(1,1)^{q}$, which has no solutions since $U_{n}(1,1)\in\{0,\pm1\}$ for all $n$. When $x=1,y=-1$ there is exactly one solution for any choice of $p$, that being when $n=4$ and $q=1$. This gives the solutions $(n,p,q,x,y)=(4,p,1,1,-1)$ listed in the statement of the proposition. We will henceforth assume $x>1$.

When $x=2$, the equation becomes $1+2^{p+1}=U_{n}^{q}$. By Mih\u{a}ilescu's Theorem (Catalan's Conjecture), the only solution to this equation with $q>1$ is $(n,p,q,x,y)=(3,2,2,2,1)$. When $q=1$ and $y=1$, we make the further observation that since $U_{n}(2,1)=n$, there is exactly one solution for every choice of $p$, that being $n=1+2^{p+1}$, giving the solutions $(n,p,q,x,y)=(1+2^{p+1},p,1,2,1)$ listed above. This allows us to assume that $x>2$ when $y=1$ for the remainder of the proof.

When $n$ is even, $U_{n}$ is divisible by $x$, so by fixing $n$ and reducing $1+2x^p=U_{n}^{q}$ modulo $x$, we see that $n$ must be odd. Moreover, since $U_{n}^{q}-1$ must be even, it follows that $U_{n}$ must be odd.

When $n=1$, we get $1+2x^p=1$, to which there are no solutions with $x>0$. When $n=3$ and $q=1$, we have $1+2x^{p}=x^{2}-y$. When $y=1$, this gives $2(1+x^p)=x^2$, which means that $x$ is even and $\frac{x^{2}}{2}=1+x^{p}$ is odd, which is impossible, and so there are no solutions with $n=3,y=1$ and $q=1$. When $y=-1$, we have $2x^{p}=x^{2}$, whose only solution is $p=1,x=2$, from which we obtain the solution $(n,p,q,x,y)=(3,1,1,2,-1)$ listed in the statement of the proposition.

When $n\geq5$ and $p\leq3$, we have $1+2x^{p}\leq U_{5}(x,y)$ with equality holding only when $(n,p,q,x,y)=(5,3,1,3,1)$, which is the last of the listed solutions. So we may assume $p\geq4$ whenever $n\geq5$.

\vspace*{2mm}
Now suppose that $2x^p=U_{n}^{q}-1$ and so, consequently, any prime divisor of $U_{n}^{q}-1$ must also divide $2x$. Consider the sequence $S$ with terms $S_{i}=\{U_{n}^{i}-1\}_{i=1}^{\infty}$ and observe that since $n$ is odd, every odd prime dividing $2x$ must divide either $U_{n}-1$ or $U_{n}+1$, meaning it must divide either $S_{1}$ or $S_{2}$, and since $U_{n}$ is odd, $2$ divides $S_{i}$ for every $i$. Zsigmondy's Theorem states that all but a possible handful of exceptional terms in the sequence $S$ have a primitive prime divisor. Since we have shown that every prime dividing $S_{q}=2x^p$ must enter the sequence $S$ in either the first or second term, it follows that either $q=1$, $q=2$ or, as in the sole applicable exception given by Zsigmondy's Theorem, $q=6$, $U_{n}=2$. This latter case may be ruled out immediately however, since we require $U_{n}^{q}-1$ to be even.

In the case $q=2$, we have that either $x^{2}\mid U_{n}-1$, as when $y=-1$ or when $y=1$ and $n\equiv1\pmod{4}$, or $x^{2}\mid U_{n}+1$, as when $y=1$ and $n\equiv3\pmod{4}$. In the first case, this means that $U_{n}+1\equiv2\pmod{x^{2}}$, and in the second $U_{n}-1\equiv-2\pmod{x^{2}}$. Since we also have $2x^{p}=(U_{n}-1)(U_{n}+1)$, it follows that $x\mid2$, which has already been dealt with for $q>1$.

\vspace*{2mm}

This leaves only the case $q=1$. Note that due to our earlier work, we may assume that $n\geq5$ and $p\geq4$, and that $x\geq3$ when $y=1$. Using Lemma \ref{Identities} part \ref{id3}, write 
\begin{equation*}
U_{n}=U_{\frac{n-1}{2}}V_{\frac{n+1}{2}}-(y)^{\frac{n+1}{2}}U_{-1}=U_{\frac{n+1}{2}}V_{\frac{n-1}{2}}-(y)^{\frac{n-1}{2}}U_{1}.
\end{equation*}
Since $U_{-1}=-y$, we obtain
\begin{equation*}
U_{n}-1=U_{\frac{n-1}{2}}V_{\frac{n+1}{2}}
\end{equation*}
when $y=1$, and
\begin{equation*}
U_{n}-1=\begin{cases}
U_{\frac{n-1}{2}}V_{\frac{n+1}{2}}\quad\text{if $n\equiv1\pmod{4}$,}\\
U_{\frac{n+1}{2}}V_{\frac{n-1}{2}}\quad\text{if $n\equiv3\pmod{4}$}
\end{cases}
\end{equation*}
when $y=-1$.

The relation $U_{n}-1=2x^{p}$ means that the only primes that may divide $U_{\frac{n-1}{2}}$ (or respectively $U_{\frac{n+1}{2}}$ when $y=-1$ and $n\equiv3\pmod{4}$) are $2$ or the prime divisors of $x$. When $y=-1$, or when $y=1$ and $x\geq3$, the discriminant $D=x^{2}-4y$ is positive, and so $\alpha$ and $\beta$ are real. By Carmichael's Theorem \cite{Car} (or the more general Primitive Divisor Theorem \cite{B-H-V}), the terms $U_{\frac{n-1}{2}}$ and $U_{\frac{n+1}{2}}$ possess a primitive divisor (relative to the sequence $\{U_{j}\}_{j=1}^{\infty}$) whenever $\frac{n-1}{2}>12$. Since the rank of apparition of $x$ is $2$, and the rank of apparition of $2$ must be either $2$ or $3$, it follows that $n\leq25$.

Using Lemma \ref{Identities} part \ref{id5}, write $$1+2x^{p}=x^{n-1}-(n-2)x^{n-3}y+\dots+(-1)^{\frac{n-3}{2}}\frac{(n-1)(n+1)}{8}x^{2}y^{\frac{n-3}{2}}+(-1)^{\frac{n-1}{2}}y^{\frac{n-1}{2}}.$$ 
When $y=1$ and $n\equiv3\pmod{4}$, we obtain $$2+2x^{p}=x^{n-1}-(n-2)x^{n-3}y+\dots+(-1)^{\frac{n-3}{2}}\frac{(n-1)(n+1)}{8}x^2y^{\frac{n-3}{2}},$$ giving $x^2\mid2$, which is impossible.

When $y=-1$, or when $y=1$ and $n\equiv1\pmod{4}$, this gives $$2x^{p}=x^{n-1}-(n-2)x^{n-3}y+\dots-\frac{(n-1)(n+1)}{8}x^2y.$$ Factoring an $x^2$ from the right side, which remains an integer polynomial in $x^{2}$, 
we also see that $x^{2}$ must divide $\frac{(n-1)(n+1)}{8}$, which gives $x^2\leq 78$, and so $x\leq8$. 
A brief check in SageMath \cite{sage} reveals that no additional solutions arise in these cases, which completes the proof. 

\end{proof}

The reader may have observed that our specific use of both Zsigmondy's Theorem, and the Primitive Divisor Theorem rely on the fact that $y=\pm1$. Indeed, when invoking Zsigmondy's Theorem, we relied on the fact that $U_{n}^{2}-1\equiv0\pmod{x}$. If we consider for a moment $y\neq1$, we would have that $0\equiv U_{n}^{q}-1\equiv(-y)^{\frac{q(n-1)}{2}}-1\pmod{x^2}$ when $p\geq2$ (this can be seen from Lemma \ref{Identities} part \ref{id5}), and we cannot immediately tell the first value of $q$ for which $(-y)^{\frac{q(n-1)}{2}}\equiv1\pmod{x^{2}}$.

Similarly, once we had reduced the problem to the case $q=1$ and we were looking to apply the Primitive Divisor Theorem, we used Lemma \ref{Identities} part \ref{id3}, but in the case of $|y|>1$, this does not allow us to write $2x^{p}$ as a product. This suggests that the methods applied in Proposition \ref{MainProposition} are specific to the cases $y=\pm1$, and different methods would need to be explored in order to expand this result.

\section{Proof of Theorem 1}\label{MainSec}
The proof proceeds in a similar manner to section 2 of \cite{Luca1}. Assume hereafter that $k\geq3$. We have the following proposition.
\begin{proposition}
Let $(n,k,p,q,x,y)$ be a solution to \eqref{MainEq} with $n,k,p,q$ and $x$ positive integers and $y\in\{\pm1\}$ with $D=x^2-4y>0$ (i.e. $x>2$ if $y=1$). Then
\begin{equation}
\frac{D^{(q-p)/2}(k\alpha^{p}-(k+1))}{(\alpha^{p}-1)^{2}}\alpha^{p(k+1)}-\alpha^{qn}=D^{q/2}R_{2}-D^{q/2}R_{1}-\frac{D^{(q-p)/2}\alpha^{p}}{(\alpha^{p}-1)^{2}},
\end{equation}
where $$|R_{1}|<k^{2}\alpha^{k(p-2)},\quad\text{and}\quad |R_{2}|<\alpha^{n(q-2)}.$$
\end{proposition}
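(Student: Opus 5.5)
The plan is to expand both sides of \eqref{MainEq} with the Binet formula \eqref{FirstKindBinet}, keep only the dominant $\alpha$-power on each side, and collect everything else into $R_{1}$ and $R_{2}$. Since $y=\pm1$ we have $|\beta|=\alpha^{-1}$ and $\alpha-\beta=\sqrt{D}$. Also $D\geq5$ in every case allowed here: $D=x^{2}+4\geq5$ when $y=-1$, and $D=x^{2}-4\geq5$ when $y=1$, $x\geq3$. This fact will control the binomial coefficients.

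\emph{Left side.} Write
\[
U_{j}^{p}=D^{-p/2}(\alpha^{j}-\beta^{j})^{p}=D^{-p/2}\alpha^{pj}+D^{-p/2}\sum_{i=1}^{p}\binom{p}{i}(-1)^{i}\alpha^{j(p-i)}\beta^{ji}.
\]
Define $R_{1}:=D^{-p/2}\sum_{j=1}^{k}j\sum_{i\geq1}\binom{p}{i}(-1)^{i}\alpha^{j(p-i)}\beta^{ji}$. For the main term put $z=\alpha^{p}$ and use the closed form
\[
\sum_{j=1}^{k}jz^{j}=\frac{(kz-(k+1))z^{k+1}+z}{(z-1)^{2}}.
\]
This gives
\[
\sum_{j=1}^{k}jU_{j}^{p}=D^{-p/2}\left(\frac{(k\alpha^{p}-(k+1))\alpha^{p(k+1)}}{(\alpha^{p}-1)^{2}}+\frac{\alpha^{p}}{(\alpha^{p}-1)^{2}}\right)+R_{1}.
\]

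\emph{Right side.} Similarly set $R_{2}:=U_{n}^{q}-D^{-q/2}\alpha^{qn}=D^{-q/2}\sum_{i\geq1}\binom{q}{i}(-1)^{i}\alpha^{n(q-i)}\beta^{ni}$. Equate the two sides and multiply through by $D^{q/2}$. Moving $\alpha^{qn}$ and the term $\alpha^{p}/(\alpha^{p}-1)^{2}$ to the appropriate sides gives exactly the displayed identity. This part is pure bookkeeping.

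\emph{Bounds.} Each summand in $R_{1}$ has absolute value $\alpha^{j(p-2i)}\leq\alpha^{j(p-2)}$, and the coefficients satisfy $\sum_{i\geq1}\binom{p}{i}<2^{p}$. Hence
\[
|R_{1}|<\left(\tfrac{2}{\sqrt{D}}\right)^{p}\sum_{j=1}^{k}j\,\alpha^{j(p-2)}\leq\tfrac{k(k+1)}{2}\alpha^{k(p-2)}\leq k^{2}\alpha^{k(p-2)}
\]
for $p\geq2$, using $2/\sqrt{D}<1$ and monotonicity of $\alpha^{j(p-2)}$ in $j$. The same argument gives $|R_{2}|<(2/\sqrt{D})^{q}\alpha^{n(q-2)}<\alpha^{n(q-2)}$ for $q\geq2$.

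The main obstacle I expect is the case of exponent $1$ (and possibly $2$). When $p=1$ the bound $\alpha^{j(p-2)}=\alpha^{-j}$ decreases in $j$, so the crude monotonicity step fails. Here I would instead sum the remainder exactly: $R_{1}=-D^{-1/2}\sum_{j}j\beta^{j}$, which again has the closed form above with $z=\beta$. I would then check directly that the result is smaller than $k^{2}\alpha^{-k}$, possibly by absorbing the $k$-independent part of that closed form into the main terms. Failing that, I would treat $p=1$ separately, noting that then $R_{1}$ is bounded by an absolute constant. The case $q=1$ is easier: $R_{2}=-D^{-1/2}\beta^{n}$, of size $D^{-1/2}\alpha^{-n}<\alpha^{-n}$, so the bound holds directly.
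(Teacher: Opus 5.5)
For $p\geq2$ your argument is the paper's: Binet expansion, the bound $(2/\sqrt D)^p\alpha^{j(p-2)}<\alpha^{j(p-2)}$ on the non-dominant terms, monotonicity in $j$, and the closed form for $\sum_{j\le k}jz^j$. Your second term correctly carries $(\alpha^p-1)^2$; the paper's proof misprints it. The $R_2$ estimate needs no case split, since nothing is summed over $n$: $|R_2|\le(2^q-1)D^{-q/2}\alpha^{n(q-2)}<\alpha^{n(q-2)}$ for every $q\geq1$.

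The genuine gap is $p=1$. You are right that it exists, and the paper's proof has the same hole: it asserts $|R_1|<k^2\alpha^{k(p-2)}$ for all $p$. Your fallbacks do not close it, however. With $R_1$ defined as the natural remainder, $R_1=-D^{-1/2}B_k$ where $B_k=\sum_{j\le k}j\beta^j\to\beta/(1-\beta)^2\neq0$, whereas $k^2\alpha^{-k}\to0$. So the ``direct check'' must fail for large $k$. Moving the $k$-independent part into the main terms changes an identity whose main terms are fixed by the statement. An absolute-constant bound is simply not the bound claimed. (It is, however, all the later argument needs: after division by $\alpha^{p(k+1)}$ the $R_1$-term only has to lie below $D^{q/2}k^2\alpha^{-k/q-p}$.)

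The missing idea, for the statement as written, is that only $R_2-R_1$ enters the identity. For $p=1$ take $R_1:=0$ and $R_2':=U_n^q-D^{-q/2}\alpha^{qn}+D^{-1/2}B_k$.

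First, $|B_k|<1$. For $y=1$ one has $0<B_k<\beta/(1-\beta)^2\le1$. For $y=-1$ the terms alternate in sign and decrease in size from $j=2$ on, so $|B_k|\le|\beta|$.

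Hence $|R_2'|<\alpha^{n(q-2)}$. For $q\geq3$ this follows from the crude estimate. For $q=2$ it follows from the exact remainder $(\beta^{2n}-2y^n)/D$ together with $D\geq5$.

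For $q=1$ you need integrality. Here $R_2'=D^{-1/2}(B_k-\beta^n)$, and $B_k-\beta^n$ is the conjugate of $\gamma=\sum_{j\le k}j\alpha^j-\alpha^n\in\mathbb{Z}[\alpha]$. The equation $\sum jU_j=U_n$ says exactly that $\gamma$ equals its conjugate. Since $D$ is not a square, $\gamma\in\mathbb{Z}$. Finally $|B_k-\beta^n|<1$ (for $y=-1$ use $n\geq4$, which holds once $k\geq3$), which forces $\gamma=0$, i.e.\ $R_2'=0$.
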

\begin{proof}
From $$U_{j}^{p}=\frac{(\alpha^{j}-\beta^{j})^{p}}{D^{p/2}}=\frac{\alpha^{jp}}{D^{p/2}}+\zeta_{p,j}$$ with $$|\zeta_{p,j}|<\frac{2^{p}\alpha^{(p-2)j}}{D^{p/2}}<\alpha^{j(p-2)},$$ we have 
$$\sum\limits_{j=1}^{k}jU_{j}^{p}=\frac{1}{D^{p/2}}\left(\sum\limits_{j=1}^{k}j\alpha^{jp}\right)+R_{1},$$ with $|R_{1}|<k^2\alpha^{k(p-2)}.$ Hence $$\sum\limits_{j=1}^{k}jU_{j}^{p}=\frac{k\alpha^{p}-(k+1)}{D^{p/2}(\alpha^{p}-1)^{2}}\alpha^{p(k+1)}+\frac{\alpha^{p}}{D^{p/2}(\alpha^{p}-1)}+R_{1}.$$

Similarly, we have $$U_{n}^{q}=\frac{\alpha^{nq}}{D^{q/2}}+R_{2}$$ with $$|R_{2}|<\alpha^{n(q-2)}.$$ The statement of the proposition follows.
\end{proof}
In both the cases $y=1$ and $y=-1$, the bounds $$D^{q/2}|R_{1}|<D^{q/2}k^{2}\alpha^{kp-2k}$$  and $$\frac{D^{(q-p)/2}\alpha^{p}}{(\alpha^{p}-1)^{2}}\leq D^{q/2}\alpha^{3}\leq D^{q/2}\alpha^{kp-k+3}$$ both apply. In order to bound the term $5^{q/2}R_{2}$, we must work with the cases $y=\pm1$ separately.
\begin{lemma}\label{BoundR2}
When $y=-1$, the bound $$D^{q/2}|R_{2}|<k\left(D+\frac{1}{x}\right)^{q+1}\alpha^{kp-2k/q+1}$$ applies.
\end{lemma}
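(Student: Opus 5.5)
The plan is to compute $D^{q/2}R_{2}$ exactly using the fact that $y=-1$ forces $\beta=-1/\alpha$. Then bound it by a power of $\alpha^{n}$, and finally trade $\alpha^{n}$ for a power of $\alpha^{k}$ using equation \eqref{MainEq} together with the upper bounds of Lemma \ref{UVBoundsNew} and the estimate in the proof of Lemma \ref{EqBounds}.

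\emph{Step 1 (explicit remainder).} Since $\alpha\beta=y=-1$, we have $\beta^{n}=(-1)^{n}\alpha^{-n}$ and $\sqrt{D}=\alpha-\beta=\alpha+\alpha^{-1}$. The binomial theorem gives
\[
D^{q/2}R_{2}=(\alpha^{n}-\beta^{n})^{q}-\alpha^{nq}=\sum_{i=1}^{q}\binom{q}{i}(-1)^{i(n+1)}\alpha^{n(q-2i)}.
\]
Hence
\[
D^{q/2}|R_{2}|\leq\alpha^{nq}\bigl((1+\alpha^{-2n})^{q}-1\bigr)\leq\alpha^{n(q-2)}q(1+\alpha^{-2n})^{q-1}.
\]
Because $n\geq3$ (indeed $U_{n}^{q}$ exceeds $1+2U_{2}^{p}$ when $k\geq3$), the factor $q(1+\alpha^{-2n})^{q-1}$ is at most a crude constant $c_{q}$. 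I would later absorb $c_{q}$ into $(D+1/x)^{q+1}$, using $D+1/x\geq 5$.

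\emph{Step 2 (remove $n$).} I write $\alpha^{n(q-2)}=(\alpha^{nq})^{1-2/q}$. From $\alpha^{n}=\sqrt{D}\,U_{n}+\beta^{n}\leq\sqrt{D}\,U_{n}+1/\alpha$ we get $\alpha^{nq}\leq(\sqrt{D}+1/x)^{q}U_{n}^{q}$, using $U_{n}\geq1$ and $\alpha\geq x$. By \eqref{MainEq} and the geometric series estimate in the proof of Lemma \ref{EqBounds}, $U_{n}^{q}<kU_{k}^{p}\,C$, where $C=1+\frac{x^{2}+2}{x^{p-1}(x^{3}-x^{2}+2x-1)}\leq 1+\frac{3}{x^{p-1}}$. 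Lemma \ref{UVBoundsNew} then gives $U_{k}\leq\alpha^{k-1}$. Combining,
\[
\alpha^{n(q-2)}\leq\Bigl((\sqrt{D}+\tfrac1x)^{q}\,C\,k\,\alpha^{p(k-1)}\Bigr)^{1-2/q}\leq k\,(\sqrt{D}+\tfrac1x)^{q}\,C\,\alpha^{kp-2kp/q}.
\]
Since $p\geq1$, we have $\alpha^{-2kp/q}\leq\alpha^{-2k/q}$, so the exponent $kp-2k/q+1$ of the statement is reached with room to spare.

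\emph{Step 3 (constants).} It remains to show that $c_{q}\,C\,(\sqrt{D}+1/x)^{q}\leq(D+1/x)^{q+1}\alpha$. Here $(\sqrt{D}+1/x)^{q}\leq(D+1/x)^{q}$ because $D\geq5$. The extra factor $(D+1/x)\alpha\geq5$ must dominate $c_{q}C$. This is immediate if one uses the sharper form $(1+\alpha^{-2n})^{q}-1\leq q\alpha^{-2n}e^{q\alpha^{-2n}}$ with $\alpha^{2n}\geq\alpha^{6}$. If that is not enough for some small $x$ (e.g.\ $x=1$, where $\alpha^{6}\approx18$), I would instead keep a spare factor $\alpha^{-2k(p-1)/q}$ from Step 2, or restrict to $q\leq 11$ as allowed by the theorem.

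The main obstacle is exactly this bookkeeping: the factor $q$ from the binomial expansion has to be absorbed without any assumption making $\alpha$ large. I expect the slack between $\alpha^{-2kp/q}$ and $\alpha^{-2k/q}$, together with $\alpha^{2n}\geq\alpha^{6}$, to be what the stated form of the bound is designed to use.
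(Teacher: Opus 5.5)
\textbf{There are two genuine gaps; the first is structural.} Your Step 2 fails for $q=1$, which the lemma must cover. There $1-2/q=-1$, so raising the upper bound $\alpha^{nq}\le(\sqrt{D}+1/x)^{q}C\,k\,\alpha^{p(k-1)}$ to the power $1-2/q$ reverses the inequality.

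This is not cosmetic. For $q=1$ your own Step 1 gives exactly $D^{1/2}|R_{2}|=\alpha^{-n}$. For $p=1$ the target is $k(D+1/x)^{2}\alpha^{1-k}$. So you need $n$ to grow with $k$, i.e.\ a \emph{lower} bound on $n$, and your argument only ever bounds $n$ from above.

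The paper handles all $q$ at once by splitting $\alpha^{n(q-2)}=\alpha^{(n-2)q}\,\alpha^{2q}\,\alpha^{-2n}$:
\begin{itemize}
\item The increasing part $\alpha^{(n-2)q}$ is bounded by the second inequality of Lemma~\ref{EqBounds}. Its factor $x^{-q}$, coming from $U_{n}\ge x\alpha^{n-2}$, combines with $D^{q/2}\alpha^{2q}$ into $(D+1/x)^{q}$.
\item The decreasing part $\alpha^{-2n}$ is bounded by the first inequality, which gives $n>(k-2)/q+1$.
\end{itemize}
In your setup, $q=1$ could be patched with $n>k$ (from $U_{n}\ge kU_{k}>U_{k}$), but some such lower bound has to appear.

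\textbf{Step 3 does not close as written either.} Once $(\sqrt{D}+1/x)^{q}$ is replaced by $(D+1/x)^{q}$, you need $c_{q}C\le(D+1/x)\alpha$. The right side does not depend on $q$, while $c_{q}\ge q$. So this fails for every fixed $x$ once $q$ is large. It already fails at $x=1$, $p=1$, $q=3$, where $C=4$ and $c_{3}C\ge 12>6\alpha\approx9.7$.

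None of your remedies removes the factor $q$:
\begin{itemize}
\item the form $q\alpha^{-2n}e^{q\alpha^{-2n}}$ still contains $q$;
\item the spare factor $\alpha^{-2k(p-1)/q}$ equals $1$ when $p=1$;
\item the lemma is not restricted to $q\le 11$, and the $x=1$ example lies inside that range anyway.
\end{itemize}

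The slack that actually works for $q\ge2$ is the one you discarded when replacing $\sqrt{D}+1/x$ by $D+1/x$. The ratio $\bigl((D+1/x)/(\sqrt{D}+1/x)\bigr)^{q}$ grows exponentially in $q$ (like $1.85^{q}$ at $x=1$). It absorbs $q(1+\alpha^{-2n})^{q-1}C$ using only $n\ge1$.

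That is just as well, because your justification of $n\ge3$ is invalid. The inequality $U_{n}^{q}>1+2U_{2}^{p}$ does not rule out $n=2$, since $x^{q}$ can exceed $1+2x^{p}$ when $q>p$.
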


\begin{proof}
First, observe that $\sqrt{D}(\alpha^{2}-\beta^{2})=Dx$, and so $\sqrt{D}\alpha^{2}=Dx+\sqrt{D}\beta^{2}$. Since $\sqrt{D}\beta^{2}=\frac{\sqrt{D}}{\alpha^{2}}<1$ for all $x\geq1$, we obtain $\sqrt{D}\alpha^{2}<Dx+1$, from which the inequality $$\left(\frac{\sqrt{D}\alpha^{2}}{x}\right)^{q}<\left(D+\frac{1}{x}\right)^{q}$$ follows. Examining the term $D^{q/2}|R_{2}|$, we form the inequality
\begin{align*}
D^{q/2}|R_{2}|<&D^{q/2}\alpha^{n(q-2)}\\
=&D^{q/2}\alpha^{(n-2)q+2q-2n}.\\
\end{align*}
The second inequality in Lemma \ref{EqBounds} gives $(n-2)q<(k-1)p+\log_{\alpha}{k}+\log_{\alpha}\left(1+\frac{x^{2}+2}{x^{p-1}(x^{3}-x^{2}+2x-1)}\right)-q\log_{\alpha}x$, where $1+\frac{x^{2}+2}{x^{p-1}(x^{3}-x^{2}+2x-1)}<4$, it follows that
\begin{align*}
D^{q/2}|R_{2}|<&D^{q/2}\cdot 4k\alpha^{(k-1)p+2q-2n}x^{-q}\\
<&4k\left(D+\frac{1}{x}\right)^{q}\alpha^{(k-1)p-2n}.
\end{align*}

The first inequality in Lemma \ref{EqBounds} gives $$n>\frac{k-2}{q}+1,$$ which yields
\begin{align*}
D^{q/2}|R_{2}|<&4k\left(D+\frac{1}{x}\right)^{q}\alpha^{(k-1)p-2(k-2)/q-2}.\\
\end{align*}
Since $(k-1)p-\frac{2(k-2)}{q}-2=kp-\frac{2k}{q}+1-\left(p-\frac{4}{q}+3\right)$, where $p-\frac{4}{q}+3>0$ for positive $p$ and $q$, and $4<D+\frac{1}{x}$ for all positive $x$, we obtain
\begin{align*}
D^{q/2}|R_{2}|<&k\left(D+\frac{1}{x}\right)^{q+1}\alpha^{kp-2k/q+1},
\end{align*}
as in the statement of the lemma.
\end{proof}
\begin{lemma}
When $y=1$, the bound $$D^{q/2}|R_{2}|<k\left(V_{2}+\frac{1}{x}\right)^{q+1}\alpha^{kp-2k/q+1/2}$$ applies.
\end{lemma}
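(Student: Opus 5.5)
The plan is to follow the proof of Lemma \ref{BoundR2} closely, replacing Lemma \ref{EqBounds} by Lemma \ref{Y1EqBounds} and keeping track of the powers of $D$ that come from the factor $\log_{\alpha}(\sqrt{D})$ appearing in Lemma \ref{Y1EqBounds}.

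\emph{Step 1 (the analogue of $\sqrt{D}\alpha^{2}<Dx+1$).} For $y=1$ we still have $\alpha^{2}-\beta^{2}=x\sqrt{D}$, so
\[
\sqrt{D}\alpha^{2}=Dx+\sqrt{D}\beta^{2}=Dx+\frac{\sqrt{D}}{\alpha^{2}}.
\]
Since $\beta=1/\alpha$ and $\sqrt{D}<x\le\alpha+1<\alpha^{2}$ for $x\ge3$, the last term is less than $1$. Dividing by $x$ gives $\sqrt{D}\alpha^{2}/x<D+1/x$. Because $V_{2}=x^{2}-2=D+2$, this implies
\[
\left(\frac{\sqrt{D}\alpha^{2}}{x}\right)^{q}<\left(V_{2}+\frac1x\right)^{q},
\]
which leaves some slack for later.

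\emph{Step 2 (eliminate $(n-2)q$).} Write
\[
D^{q/2}|R_{2}|<D^{q/2}\alpha^{n(q-2)}=D^{q/2}\alpha^{(n-2)q+2q-2n}.
\]
Insert the second inequality of Lemma \ref{Y1EqBounds}. Exponentiated, it says
\[
\alpha^{(n-2)q}<k\,x^{-q}\,D^{-p/2}\,\frac{\alpha^{p(k+1)}}{\alpha^{p}-1}=k\,x^{-q}\,D^{-p/2}\,\frac{\alpha^{p}}{\alpha^{p}-1}\,\alpha^{pk}.
\]
Combining this with Step 1 gives
\[
D^{q/2}|R_{2}|<k\,D^{-p/2}\,\frac{\alpha^{p}}{\alpha^{p}-1}\left(V_{2}+\frac1x\right)^{q}\alpha^{pk-2n}.
\]

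\emph{Step 3 (eliminate $n$).} From the first inequality of Lemma \ref{Y1EqBounds}, dropping $\log_{\alpha}k$ and $p\log_{\alpha}x$ and using $p\ge1$, we get
\[
n>\frac{k-2}{q}+\log_{\alpha}\sqrt{D}.
\]
Hence
\[
\alpha^{-2n}<\alpha^{-2k/q+4/q}\,D^{-1}.
\]
The extra $D^{-1}$ here is the gain over the $y=-1$ case. Substituting, we obtain
\[
D^{q/2}|R_{2}|<k\left(V_{2}+\frac1x\right)^{q}\alpha^{kp-2k/q+1/2}\cdot C,\qquad C=\frac{\alpha^{p}}{\alpha^{p}-1}\,D^{-p/2-1}\,\alpha^{4/q-1/2}.
\]

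\emph{Step 4 (absorb the constant), the main obstacle.} It remains to show $C\le V_{2}+1/x$, so that the leftover factor becomes the extra power in $(V_{2}+1/x)^{q+1}$.

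The worst case is $q=1$ and $p=1$, where $C=\frac{\alpha}{\alpha-1}D^{-3/2}\alpha^{7/2}$. Since $D=\alpha^{2}-2+\alpha^{-2}$ is comparable to $\alpha^{2}$, the factor $D^{-3/2}$ overwhelms $\alpha^{7/2}$ up to a factor of order $\alpha^{1/2}$. This leaves $C$ of size roughly $\alpha^{1/2}$, comfortably below $V_{2}\approx\alpha^{2}$. Concretely, I would:
\begin{itemize}
\item check $x=3$ directly (here $\alpha=(3+\sqrt5)/2$, $D=5$, $V_{2}+1/x=22/3$);
\item for $x\ge4$, use the crude bounds $\alpha\le x$, $D\ge x^{2}-4$ and $\alpha^{p}/(\alpha^{p}-1)\le3/2$ to verify $C<V_{2}$ uniformly in $p,q\ge1$.
\end{itemize}
If the $D^{-p/2}$ factor were unavailable (for instance because Step 2 had been done more wastefully), this absorption would fail for small $x$. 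So the exponents of $D$ must be tracked carefully throughout Steps 2 and 3.
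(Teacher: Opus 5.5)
Your proposal is correct and follows the paper's skeleton. You bound $\sqrt{D}\alpha^{2}/x$ by $V_{2}+\frac{1}{x}$, use the second inequality of Lemma \ref{Y1EqBounds} to remove $(n-2)q$ and the first to remove $n$, and absorb the leftover constant into the extra factor $V_{2}+\frac{1}{x}$; your check $C\approx 4.2<\frac{22}{3}$ at $x=3$ and the crude bound for $x\ge 4$ both go through.

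The only difference is bookkeeping. The paper weakens $D^{-p/2}$ to $\alpha^{-p/2}$ but keeps $p\log_{\alpha}x>p$ and $\log_{\alpha}\sqrt{D}>\frac12$, so only a factor $2<V_{2}+\frac1x$ and the exponent comparison $\frac p2-\frac2q+1\ge-\frac12$ remain, with no case analysis. The delicacy you flag at the end comes from discarding $p\log_{\alpha}x$ in Step 3, not from the lemma itself.
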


\begin{proof}
From
\begin{align*}
xV_{2}+1=x^{3}-2x+1>&x^{3}-3x=\frac{2x^{3}-6x}{2}=\frac{x(x^{2}-2)+x(x^{2}-4)}{2}\\
=&\frac{x(x^{2}-2)+xD}{2}>\sqrt{D}\left(\frac{(x^{2}-2)+x\sqrt{D}}{2}\right)\\
=&\sqrt{D}\alpha^{2},
\end{align*}
we get $$\frac{\sqrt{D}\alpha^{2}}{x}<\left(V_{2}+\frac{1}{x}\right).$$
The second inequality in Lemma \ref{Y1EqBounds} gives  
\begin{align*}
D^{q/2}|R_{2}|<&D^{q/2}\alpha^{n(q-2)}\\
<&D^{q/2}\cdot\frac{k}{\alpha^{p}-1}x^{-q}D^{-p/2}\alpha^{p(k+1)+2q-2n}.\\
\end{align*}
Since $\alpha^{p}>\alpha\geq\frac{3+\sqrt{5}}{2}>2$, and $\frac{1}{c-1}<\frac{2}{c}$ whenever $c>2$, we have $\frac{k}{\alpha^{p}-1}<\frac{2k}{\alpha^{p}}$. Since $D=x^2-4>x>\frac{x+\sqrt{x^{2}-4}}{2}=\alpha$ for $x\geq3$, we can write $D^{-p/2}<\alpha^{-p/2}$. Combining this with $\frac{\sqrt{D}\alpha^{2}}{x}<\left(V_{2}+\frac{1}{x}\right)$, we get
\begin{align*}
D^{q/2}|R_{2}|<&D^{q/2}\cdot 2k\alpha^{(k-\frac{1}{2})p+2q-2n}x^{-q}\\
<&2k\left(V_{2}+\frac{1}{x}\right)^{q}\alpha^{(k-\frac{1}{2})p-2n}.
\end{align*}

The first inequality in Lemma \ref{Y1EqBounds} gives $$\log_{\alpha}k+p\log_{\alpha}x+(k-2)p<q(n-\log_{\alpha}\sqrt{D}).$$ Since $D>\alpha$ for $x\geq3$, we have $\frac{1}{2}\log_{\alpha}D>\frac{1}{2}$. Moreover, since $x>\alpha$, $p\log_{\alpha}x+(k-2)p>(k-1)p$, so we can write $$p(k-1)<q\left(n-\frac{1}{2}\right),$$ which leads to $$2n>2\left(\frac{k-1}{q}\right)+1.$$ We then write
\begin{align*}
D^{q/2}|R_{2}|<&2k\left(V_{2}+\frac{1}{x}\right)^{q}\alpha^{(k-\frac{1}{2})p-2(k-1)/q-1}.\\
\end{align*}
We can bound the exponent $(k-\frac{1}{2})p-\frac{2(k-1)}{q}-1=kp-\frac{2k}{q}-\left(\frac{p}{2}-\frac{2}{q}+1\right)$ by observing that $\frac{p}{2}-\frac{2}{q}+1\geq-\frac{1}{2}$ for positive integers $p$ and $q$. So 
\begin{align*}
D^{q/2}|R_{2}|<&k\left(V_{2}+\frac{1}{x}\right)^{q+1}\alpha^{kp-2k/q+1/2},
\end{align*}
as in the statement of the lemma.
\end{proof}

In both cases, $D^{q/2}|R_{2}|<4k(D+\frac{1}{x})^{q+1}\alpha^{kp-2k/q+2}$.

\vspace*{2mm}

Set $\Delta_{p}:=(\alpha^{p}-1)^{2}$, $C=D+\frac{1}{x}$ and $$z_{q}(k):=D^{q/2}\alpha^{3}+4kC^{q+1}\alpha^{2}+D^{q/2}k^{2}.$$ Then we have 
\begin{equation}\label{absineq}
\left|\frac{D^{(q-p)/2}(k\alpha^{p}-k-1)}{\Delta_{p}}-\alpha^{qn-p(k+1)}\right|<\frac{z_{q}(k)}{\alpha^{k/q+p}}.
\end{equation}
Now set $\mu:=qn-p(k+1)$. Suppose first that $$\alpha^{\mu}\leq\frac{D^{(q-p)/2}}{3\Delta_{p}}.$$ When $k\geq3$ and $p\geq1$, we have $$\frac{D^{(q-p)/2}}{\Delta_{p}}(k\alpha^{p}-k-1)-\alpha^{\mu}\geq \frac{D^{(q-p)/2}}{\Delta_{p}}(k(\alpha^{p}-1)-1-1/3)>\frac{D^{(q-p)/2}}{\Delta_{p}}.$$ Hence $$\alpha^{k/q+p}<D^{(p-q)/2}\Delta_{p}z_{q}(k).$$

Suppose now that
\begin{equation}\label{Opposite}
\alpha^{\mu}>\frac{D^{(q-p)/2}}{3\Delta_{p}}.
\end{equation} 
If the left hand side of \eqref{absineq} is zero, we may rearrange and take norms in $\mathbb{Q}(\sqrt{D})$ to get $$k^{2}y^{p}-(k+1)kV_{p}+(k+1)^{2}=-D^{p-q}y^{qn-p(k+1)}(y^{p}-V_{p}+1)^{2}.$$ If $y=1$ or $y=-1$ and $p$ is even, we may solve $$(V_{p}-2)k^{2}+(V_{p}-2)k-1=\pm D^{p-q}(V_{p}-2)^{2}.$$ If $q>p$ then if a positive integer $l$ divides $D^{p-q}(V_{p}-2)^{2}$, reducing modulo $l$ gives $-1\equiv0\pmod{l}$, which means that $\pm D^{p-q}(V_{p}-2)^{2}=\pm1$. From this, we obtain $$(V_{p}-2)k^{2}+(V_{p}-2)k-1=\pm1,$$ which has only the trivial solution $p=1,x=3,k=1$ when $y=1$ and $x\geq3$, and has no solutions with $y=-1$ and even $p\geq2$. 

If $p\geq q$ then $V_{p}-2$ divides $1$, which leaves only the possibilities
\begin{itemize}
\item{}$y=-1,x=1,p=2,$ in which case we have $q=2$, $k=1$ or $q=1$, $k=2$, both of which yield trivial solutions to equation \eqref{FibonacciEq};
\item{} $y=1,x=3,p=1$, in which case $k=1$, $q=1$ is the only possibility, another trivial solution.
\end{itemize}

When $y=-1$ and $p$ is odd, Lemma \ref{SecondKindEquation} tells us that there are no solutions to $$V_{p}k^{2}+(V_{p}-2)k-1=\pm D^{p-q}V_{p}^{2}$$ with $x\geq2$ and $k\geq3$.

If the left hand side of \eqref{absineq} is nonzero, then $$|k\alpha^{p}-(k+1)-\alpha^{\mu}D^{(p-q)/2}\Delta_{p}|<\frac{D^{(p-q)/2}\Delta_{p}z_{q}(k)}{\alpha^{k/q+p}}.$$ Following from \eqref{Opposite} and the fact that $\sigma(\Delta_{p})=(\beta^{p}-1)^{2}\leq2$, $$|\beta|^{\mu}=\alpha^{-\mu}<3D^{(p-q)/2}\Delta_{p},$$ whence we also have $$|k\beta^{p}-(k+1)-\beta^{\mu}(-D^{1/2})^{p-q}\sigma(\Delta_{p})|<(2k+1)+6D^{p-q}\Delta^{p}.$$ Multiplying the two left hand sides we get the norm of a nonzero algebraic integer, which is $\geq1$, so the bound 
\begin{equation}\label{kBound}
\alpha^{k/q+p}<D^{(p-q)/2}\Delta_{p}z_{q}(k)((2k+1)+6D^{p-q}\Delta_{p})
\end{equation}
applies, which is weaker than the earlier bound. This provides a general upper bound on $k$ for fixed $p$ and $q$.

The following result shows that \eqref{kBound} is sufficient to bound all sequences $U_{n}(x,\pm1)$.
\begin{lemma}\label{kBound}
If $(n,k,p,q,x)$ is a positive integer solution to equation \eqref{MainEq} then $x\leq\frac{(k+1)^{2}}{4}+1$.

\end{lemma}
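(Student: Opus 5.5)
The plan is to reduce equation \eqref{MainEq} modulo $x$ and show that $x$ divides a nonzero integer of absolute value at most $\frac{(k+1)^2}{4}+1$. Throughout I take $y=\pm1$ and $k\geq3$, as in the standing assumptions of this section.

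\emph{Residues of $U_j$ modulo $x$.} By Lemma \ref{Identities}\ref{id5}, every term of $U_r$ is divisible by $x$ except possibly the one with $r-2i-1=0$. That term is present only when $r$ is odd, and then it equals $(-y)^{(r-1)/2}$. Hence
\[
U_r\equiv 0\pmod{x}\ \text{ for $r$ even},\qquad U_r\equiv(-y)^{(r-1)/2}\pmod{x}\ \text{ for $r$ odd}.
\]
Consequently $U_n^q\equiv\varepsilon\pmod x$ for some $\varepsilon\in\{0,1,-1\}$. On the left-hand side of \eqref{MainEq} only the odd indices survive modulo $x$, which gives
\[
S:=\sum_{\substack{1\le j\le k\\ j\text{ odd}}} j\,\bigl((-y)^{p}\bigr)^{(j-1)/2}\equiv \varepsilon\pmod{x}.
\]
So $x\mid S-\varepsilon$.

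\emph{Size of $S$.} Let $m=\lceil k/2\rceil$ be the number of odd $j\le k$; since $k\geq3$, we have $m\geq2$. There are two cases according to the sign $(-y)^p$.
\begin{itemize}
\item If $(-y)^p=1$, then $S=1+3+\dots+(2m-1)=m^2$.
\item If $(-y)^p=-1$, then $S$ is the alternating sum $1-3+5-\cdots$ of the first $m$ odd numbers. Pairing consecutive terms shows that $S=\pm m$.
\end{itemize}
In both cases $2\le|S|\le m^2\le\frac{(k+1)^2}{4}$.

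\emph{Conclusion.} Since $|S|\ge2$ and $|\varepsilon|\le1$, the integer $S-\varepsilon$ is nonzero. Therefore
\[
x\le|S-\varepsilon|\le|S|+1\le\frac{(k+1)^2}{4}+1,
\]
which is the claimed bound.

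The only delicate point is ruling out $S=\varepsilon$. This is exactly where $k\ge3$ is needed: for $k\le2$ one has $m=1$, so $S=1$ and the argument fails. That failure is consistent with the $k=2$ solutions found in Proposition \ref{MainProposition}, where $x$ is unbounded in some families. I would also double-check the parity bookkeeping for the alternating case, namely that the partial sums of $1-3+5-\cdots$ really are $\pm m$, so that they never equal $0$ or $\pm1$ once $m\ge2$.
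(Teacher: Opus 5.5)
Your proof is correct and follows the paper's argument: reduce \eqref{MainEq} modulo $x$ using Lemma \ref{Identities}\ref{id5} and bound the resulting constant term. It is in fact more careful than the paper, which never checks that this constant term $S-\varepsilon$ is nonzero and bounds it only from above rather than in absolute value; your evaluation $|S|\in\{m,m^{2}\}$ with $m=\lceil k/2\rceil\ge2$ closes exactly that gap.

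One correction to your closing aside: the $k=2$ solutions of Proposition \ref{MainProposition} all have $x\le3<\frac{(2+1)^{2}}{4}+1$, so they satisfy the bound, and the mod-$x$ argument merely fails to prove it there. The family with genuinely unbounded $x$ is instead the trivial $k=n=1$ solution $U_{1}^{p}=U_{1}^{q}$.
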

\begin{proof}
Using Lemma \ref{Identities} part \ref{id5}, we see that $U_{j}^{p}$ is a polynomial in $x$ with constant term $0$ when $j$ is even and $(-y)^{p\left(\frac{j-1}{2}\right)}$ if $j$ is odd. Defining $\epsilon=0$ if $n$ is even and $\epsilon=(-y)^{q\left(\frac{n-1}{2}\right)}$ if $n$ is odd, we see that the expression
\begin{equation}\label{xBound}
\left(\sum_{j=1}^{k}jU_{j}^{p}\right)-U_{n}^{q}
\end{equation}
is a polynomial in $x$ with constant term $$\left(\sum_{j\text{ odd}\leq k}j(-y)^{p\left(\frac{j-1}{2}\right)}\right)-\epsilon\leq\frac{(k+1)^{2}}{4}+1.$$

In any solution to \eqref{MainEq}, $x$ must divide this term. It follows that $x\leq\frac{(k+1)^{2}}{4}+1$, as in the statement of the lemma.
\end{proof}
Taking logarithms in \eqref{kBound}, we obtain
\begin{equation}\label{kBound1}
k<q\left(\frac{\log{\left(D^{(p-q)/2}\Delta_{p}z_{q}(k)((2k+1)+6D^{p-q}\Delta_{p})\right)}}{\log{\alpha}}\right)-pq.
\end{equation}
Fixing $p$ and $q$ and substituting the bound $x\leq\frac{(k+1)^{2}}{4}+1$, we see that the numerator on the right side is of the form $\log{\left(g(k)\right)}$ for $g(k)$ a polynomial in $k$ with degree depending on $p$ and $q$. Given that we are assuming that $x\geq3$ when $y=1$ and that $x\geq2$ when $y=-1$, the term $\log{\alpha}$ in the denominator is no less than $\log{\left(\frac{3+\sqrt{5}}{2}\right)}$ when $y=1$ and no less than $\log{\left(1+\sqrt{2}\right)}$ when $y=-1$, and so after fixing $p$ and $q$, a bound on $k$ may be obtained that holds for every one of these sequences. 
Using SageMath, we find a bound on $k$ for each triple $(p,q,y)$ with $1\leq p,q\leq 11$ and $y\in\{\pm1\}$. The largest of these bounds, which occurred when $p=q=11$ and $y=-1$, was $15711$. Observe that our pursuit of generality has resulted in a looser bound in the case of the Fibonacci sequence than that obtained in \cite{Luca1}.

In order to reduce the computation required, we revisit the inequalities first obtained in Lemmas \ref{EqBounds} and \ref{Y1EqBounds}. The inequalities in Lemma \ref{EqBounds} imply 
$$\log_{\alpha}(k)-\epsilon_{x,p}<q(n-1)-(k-1)p<\log_{\alpha}(k)+\delta_{x,p,q},$$ where $$\epsilon_{x,p}=p-p\log_{\alpha}(x)$$ and $$\delta_{x,p,q}=q-q\log_{\alpha}(x)+\log_{\alpha}\left(1+\frac{x^{2}+2}{x^{p-1}(x^3-x^2+2x-1)}\right).$$
Using SageMath \cite{sage}, we fix $y=-1$, fix $p$ and $q$ and search for integers $k,x$ and $n$ satisfying these inequalities, with $k$ up to the bound in the previous paragraph and $x$ a divisor of the corresponding constant term as described in the proof of Lemma \ref{xBound}. An initial search reveals that there are no solutions with $x\geq11$. Applying the inequalities in Lemma \ref{Y1EqBounds} in a similar manner reveals that no solutions exist with $x\geq11$ when $y=1$ and $p,q\leq11$. We then use $3\leq x\leq10$ when $y=1$ and $2\leq x\leq10$ when $y=-1$ in \eqref{kBound1} to reduce the bound on $k$. This time, the largest bound found was $k<838$, occurring when $p=q=11$ and $y=-1$. Using the inequalities once more, along with the new bounds on $k$, we eliminate all but $29$ possibilities for $(k,n,p,q,x,y)$. These remaining cases were checked individually and none yielded an additional solution.

\section{Some Remarks on the Cases $|y|>1$}
We will briefly discuss the difficulty that arises when applying this method to a fixed value of $y$ with $|y|>1$. In this situation, the term in Lemma \ref{kBound} would tell us that $x$ must divide $\sum\limits_{j\text{ odd}\leq k}j(-y)^{\frac{p(j-1)}{2}}$ when $n$ is even or $\sum\limits_{j\text{ odd}\leq k}j(-y)^{\frac{j-1}{2}}-y^{\frac{q(n-1)}{2}}$ when $n$ is odd. This means that we can only say that $x$ must be a divisor of some term involving $k$ in the exponent. Even were we to substitute this bound into an inequality similar to \eqref{kBound}, the advantage we gained in the case $|y|=1$, which was being able to turn \eqref{kBound1} into a bound of the form $k<\log{(g(k))}$ for $g(k)$ a polynomial in $k$, will no longer be present. This suggests that some alternative method would need to be explored in order to solve equation \eqref{MainEq} for more values of $y$.

%%%%%%%%%%%%%%%%%%%%%%%%%%%%%%%%%%%%%%%%%%%%%%%%%%%%
\section*{Acknowledgments} The authors would like to thank the anonymous referee for comments which improved the quality of this paper.

\end{document}